\documentclass[11pt]{amsart}
\usepackage{comment}
\usepackage{amssymb}
\usepackage{graphicx}
\usepackage{subcaption}
\def\refeq#1{\if\workingver y(\ref{#1})-[[#1]]\else(\ref{#1})\fi}
\def\refth#1{\if\workingver y\ref{#1}-[[#1]]\else\ref{#1}\fi}
\def\mylabel#1{\if\workingver y\label{#1}{\bf\ \ [[#1]]\ \ }\else\label{#1}\fi}
\def\mybibitem#1{\if\workingver y\bibitem{#1}{\bf\ \ [[#1]]\ \
}\else\bibitem{#1}\fi}

\newfont{\msam}{msam10}
\newfont{\msbm}{msbm10}


\def\articletheorems{
\newtheorem{thm}{Theorem}[section]

\newtheorem{cor}[thm]{Corollary}
\newtheorem{prop}[thm]{Proposition}
\newtheorem{ex}[thm]{Example}
\newtheorem{algo}{Algorithm}[section] 

}

\def\cA{\text{$\mathcal A$}}

\def\cG{\text{$\mathcal G$}}

\def\cK{\text{$\mathcal K$}}

\newcommand{\id}{\operatorname{id}}
\newcommand{\cl}{\operatorname{cl}}

\newcommand{\Int}{\operatorname{int}}

\newcommand{\bd}{\operatorname{bd}}

\newcommand{\diam}{\operatorname{diam}}

\newcommand{\dist}{\mbox{$\operatorname{dist}$}}

\renewcommand{\emptyset}{\varnothing}

\newcommand{\Inv}{\operatorname{Inv}}

\def\begeq#1{\begin{equation}\mylabel{#1}}
\def\endeq{\end{equation}}

\def\mathobj#1{\mbox{$#1$}}

\def\RR{\mathobj{\mathbb{R}}}

\def\ZZ{\mathobj{\mathbb{Z}}}


\def\setof#1{\mbox{$\{\,#1\,\}$}}


\def\0#1{\hbox{\kern25pt}$ #1 $\\}
\def\1#1{\hbox{\kern40pt}$ #1 $\\}
\def\2#1{\hbox{\kern55pt}$ #1 $\\}
\def\3#1{\hbox{\kern70pt}$ #1 $\\}

\newcounter{li}

\def\begalg#1{\begin{algo}\mylabel{#1}\normalshape:\small\baselineskip 10pt\\}
\def\endalg{\end{algo}}

\def\Figures(include=#1,cat=#2){
  \renewcommand{\textfraction}{.20}
  \renewcommand{\topfraction}{.80}
  \renewcommand{\bottomfraction}{.80}
  \renewcommand{\floatpagefraction}{.80}
  \newcount\figcount
  \figcount=0
  \let\includefigures=#1
  \def\figcat{#2}
}

\def\FigureFromFile[#1][#2](#3)#4
{
  \begin{figure}[htbp]
     \global\advance\figcount by 1
     \if\includefigures y\special{anisoscale #1.wmf, \the\hsize #2}\fi
     \vspace{#2}
     \caption{#4}
     \mylabel{#3}
   \end{figure}
}

\def\FigureFromFileTwoD[#1][#2,#3](#4)#5
{
  \begin{figure}[htbp]
     \global\advance\figcount by 1
     \if\includefigures y\special{anisoscale #1.wmf, #2 #3}\fi
     \vspace{#2}
     \caption{#5}
     \mylabel{#4}
   \end{figure}
}

\def\FigureF<#1>[#2](#3)#4
{
  \begin{figure}[htbp]
     \global\advance\figcount by 1
     \if\includefigures y\special{anisoscale \figcat/fig\number\figcount.wmf,
       \the\hsize #2}
     \fi
     \if\includefigures p
       \leavevmode
       \epsfxsize=\hsize
       \epsffile{#1}
     \fi
     \if\includefigures y
          \vspace{#2}
     \fi
     \caption{#4}
     \mylabel{#3}
   \end{figure}
}

\def\Figure[#1](#2)#3
{
  \begin{figure}[htbp]
     \global\advance\figcount by 1
     \if\includefigures y\special{anisoscale \figcat/fig\number\figcount.wmf,
       \the\hsize #1}
     \fi
     \if\includefigures p
       \leavevmode
       \epsfxsize=\hsize
       \epsffile{fig\number\figcount.eps}
     \fi
     \if\includefigures y
          \vspace{#1}
     \fi
     \caption{#3}
     \mylabel{#2}
   \end{figure}
}

\articletheorems
\newtheorem{lm}[thm]{Lemma}
\newtheorem{df}[thm]{Definition}
\newfont{\mbf}{msbm10 scaled 1100}
\newfont{\mmbf}{msbm10 scaled 800}

\def\Z{\mbox{\mbf Z}}

\def\N{\mbox{\mbf N}}

\def\z{\mbox{\mmbf Z}}

\def\for{\hspace{1ex} \mbox{ for }}

\newcommand{\ac}{\operatorname{ac}}
\newcommand{\Sol}{\operatorname{Sol}}

\usepackage[cp1250]{inputenc}

\begin{document}
\title[]{Weak index pairs and the Conley index for discrete multivalued dynamical systems}
\author{Bogdan Batko and Marian Mrozek}
\date{}
\address{Bogdan Batko\\
Division of Computational Mathematics,
Faculty of Mathematics and Computer Science,
Jagiellonian University,
ul. St. Łojasiewicza 6,
30-348 Krak\'ow,
Poland
}
\email{bogdan.batko@ii.uj.edu.pl}
\address{Marian Mrozek\\
Division of Computational Mathematics,
Faculty of Mathematics and Computer Science,
Jagiellonian University,
ul. St. Łojasiewicza 6,
30-348 Krak\'ow,
Poland
}
\email{marian.mrozek@ii.uj.edu.pl}
\date{}
\subjclass[2010]{primary 54H20, secondary 54C60, 34C35}
\thanks{
   This research is partially supported by the EU under the TOPOSYS project FP7-ICT-318493-STREP
   and by the Polish National Science Center under Ma\-estro Grant No. 2014/14/A/ST1/00453.
}

\begin{abstract}
Motivated by the problem of reconstructing dynamics from samples we revisit the Conley index theory for discrete multivalued dynamical systems \cite{KM95}. We introduce a new, less restrictive definition of the isolating neighbourhood. It turns out that then the main tool for the construction of the index, i.e. the index pair, is no longer useful. In order to overcome this obstacle we use the concept of weak index pairs.
\end{abstract}
\maketitle
\footnotetext{{\it Key words and phrases}. Discrete multivalued dynamical system, Isolated invariant set, Isolating neighbourhood, Index pair, Weak index pair, Conley index.}
\section{Introduction}
Multivalued dynamical systems are dynamical systems without forward uniqueness of solutions.
The interest in such systems originated in the qualitative analysis of differential equations without uniqueness of solutions
and differential inclusions \cite{AC84}. Surprisingly, the theory of multivalued dynamics is also important in the study
of single valued dynamical systems, particularly in the rigourous numerical analysis of differential
equations and iterates of maps. All what we can rigorously extract from a finite numerical experiment
is a multivalued map enclosing as a selector
the generator of the discrete dynamical system or time $t$ map of a differential equation.
Depending on the quality of the enclosure, some qualitative features of the single valued dynamics may be rigorously
inferred from the multivalued dynamics via topological invariants such as the Conley index. This type of analysis was
originated in \cite{MiMr95} and since then applied to many concrete problems.

Recall that according to \cite{KM95} a compact set $N$ is an isolating neighbourhood of a multivalued map $F$ if
\begin{equation}
\label{eq:mv-s-iso}
\dist(\Inv N,\bd N)> \max\{\diam F(x)\mid x\in N\},
\end{equation}
where $\Inv N$ stands for the invariant part of $F$ in $N$ (see Definition~\ref{def:inv}).
A compact set $S$ is an isolated invariant set if there exists an isolating neighbourhood $N$ such that $S=\Inv N$.
A slightly weaker but essentially similar definition of an isolated invariant set is presented in \cite{S06}.
The aim of this paper is to define the Conley index of a multivalued map $F$ for an isolating neighbourhood defined as
a compact set $N$ satisfying the condition
\begin{equation}
\label{eq:mv-iso}
\Inv N\subset\Int N,
\end{equation}
that is the same condition as for single-valued maps.
To avoid ambiguity, in this paper we refer to
an isolating neighbourhood in the sense of \eqref{eq:mv-s-iso}
as a {\em strongly isolating neighbourhood} and by an {\em isolating neighborhood} we mean
a compact set $N$ satisfying \eqref{eq:mv-iso}. We extend this convention to isolated invariant sets.
Obviously, every strongly isolating neighborhood is an isolating neighborhoood
and every strongly isolated invariant set is an invariant set.
We show (see Example~\ref{ex1}) that isolating neighbourhoods in the sense of \eqref{eq:mv-iso} may not admit
index pairs, the main tool for the construction of the Conley index.
However, we prove that weak index pairs, a concept adapted form \cite{M06}, do exist (see Theorem~\ref{thm:existence}).
Moreover, after suitable modifications the construction of the Conley index in \cite{KM95} works also
with weak index pairs (see Theorem~\ref{th_ind}) and the new definition generalizes the earlier definitions
(Theorem~\ref{thm:gen}).

The motivation for the proposed generalization comes from sampled dynamics.
More precisely, the Conley index for isolating neighbourhoods of multivalued maps
in the sense of \eqref{eq:mv-iso} may be useful in the reconstruction
of the qualitative features of an unknown dynamical system on the basis of the available experimental data only.
In general, the reconstruction problem is difficult. The potential benefits from applying topological
tools to the reconstruction problem are demonstrated in \cite{MiMrReSz99,MiMrReSz99b} and recently also in \cite{EdJaMr15}.
We use the leading example of \cite{EdJaMr15} to explain in Section~\ref{sec:m-ex} the benefits of
the proposed generalization for sampled dynamical systems.
The theory presented in this paper is also needed in extending towards the Conley index theory
the results of  \cite{KMW14}. These recent results provide
the formal ties between the classical dynamics and the combinatorial dynamics in the sense of Forman \cite{Fo98b}.

The organization of the paper is as follows.
A motivating example is presented in Section 2.
Section 3 presents preliminaries needed in the paper. Section 4 introduces the new definition of the isolating neighbourhood
and discusses its relation with the former definitions. We show that unlike strongly isolating neighbourhoods, isolating neighbourhoods do not guarantee the existence of index pairs, however, they admit weak index pairs. In Section 5 we present several properties of weak index pairs that we need to well pose the definition of the Conley index in Section 6.
In Section 7 we show that an isolated invariant set may be treated as a strongly isolated invariant set, if we embed it into an appropriate "dual" space.
In the last section we compute the Conley indexes of the two examples studied in Section 2.
\section{Sampling dynamics: a motivating example}
\label{sec:m-ex}

There are many dynamical systems for which neither  analytic computations nor a rigourous numerical analysis are possible.
Several issues cause this situation. The total lack of a mathematical model,
inadequate knowledge of parameters or complicated global nonlinearities may prevent computations at all.
Additionally, sensitive dependence on initial conditions, blowup of error estimates or lack of sufficient computational power
may make the rigourous numerical analysis infeasible.
Thus, sampling the dynamical system is often the only way to infer some knowledge about the system.
By sampling we mean collecting a finite amount of points and their approximate images
under the generator of the dynamical system. This may be done in a physical experiment or in a numerical experiment
if sufficient information about the system is available.
Then, there is the question whether this finite amount of data is sufficient to obtain some global, general knowledge about
the system and how to do it.
In general, this is difficult, particularly
in the case of chaotic dynamics, when the intrinsic problems of chaotic systems
are amplified by the introduction of noise, parameter drift, and experimental error.
In this situation the coarseness of topological invariants as the Conley index and persistent homology
 \cite{EdLeZo02} turn out to be helpful.
In particular, it is demonstrated in \cite{MiMrReSz99,MiMrReSz99b} that the Conley index combined with multivalued approach
suffices to detect chaotic dynamics in experimental data. And it is shown in a recent paper \cite{EdJaMr15}
that the eigenvalues of the map induced in homology may be reconstructed from a very small
sample of a continuous map by means of a technique known as persistence.
Both papers are concerned with a similar situation which may be roughly described as follows.

Assume $X\subset \RR^d$ is an unknown space,  $f:X\to X$ is the generator of an unknown dynamical system,
$A\subset X$ is a collection of finite samples of $X$ and $g:=f_{|A}:A\to X$ samples the map $f$.
Note that in a more realistic situation the set of samples $A$ may only lie in a vicinity of $X$ and
the sampling map $g$ may only have its graph nearby $f$. Also, as in \cite{MiMrReSz99,MiMrReSz99b}, samples may not
be available directly but only through a set of measurements. The reconstruction may take the form of
a simplicial map as in \cite{EdJaMr15} or, as in \cite{MiMrReSz99,MiMrReSz99b},
may be a multivalued map constructed as follows.
Divide $\RR^d$ into a grid of compact, acyclic sets, for instance hypercubes
(see \cite{M99} for a general definition of a grid).
Assume that for a finite family $\cA$ of grid elements we may construct
a possibly small acyclic set $\ac\cA\subset\RR^d$ such that $\bigcup\cA\subset\ac\cA$.
In the case of a cubical grid this may be the smallest hypercuboid containing all the hypercubes
in the family $\cA$.
Let $\cK$ denote the family of grid elements whose
intersection with $A$ is nonempty and assume the grid elements and/or the sample set $A$
are large enough to guarantee that $X\subset\bigcup\cK$.
Define the combinatorial multivalued map
\[
    \cG:\cK\ni Q\mapsto\setof{P\in\cK\mid \exists x\in Q:\;g(x)\in P}\subset\cK.
\]
and the multivalued map
\begin{equation}
\label{eq:F-from-cG}
    F:X\ni x\mapsto \ac\left(\bigcup_{x\in Q\in\cK}\cG(Q)\right)\subset\RR^d.
\end{equation}

It is not difficult to prove that $F$ is upper semicontinuous and $g(x)\in F(x)$ for any $x\in A$.
However, we cannot expect that the whole $f$ is a selector for $F$, that is
$f(x)\in F(x)$ for any $x\in X$.
Even worse, we cannot expect that $F$ has any continuous selector at all.
In practice, the continuous selector requirement often fails due to insufficient number of samples and a locally expanding
behaviour of $f$. But, in the method proposed in \cite{MiMrReSz99,MiMrReSz99b}
the continuous selector requirement has to be satisfied, because the approach is based on the Conley
index for continuous maps and not on the Conley index for upper-semicontinuous multivalued maps.
The multivalued map $F$ is used in  \cite{MiMrReSz99,MiMrReSz99b} only to construct the so called index pair.
An index pair is needed in the computation of the Conley index for the selector.
As a remedy, one can enlarge the values of $F$ to guarantee the existence of a continuous selector.
But, such an enlarging often leads to overestimation and loss of isolation properties.
An alternative is to apply the Conley theory directly to the multivalued map $F$ constructed from the experimental data
and extend the results to the unknown generator $f$ by means of continuation. The theory developed in \cite{KM95,S06}
only requires that $F$ is upper-semicontinuous but does not require the existence of a continuous selector.
In particular, the generator $f$ may be located nearby $F$ but not necessarily inside $F$.
Unfortunately, such an approach suffers from the very restrictive nature of condition \eqref{eq:mv-s-iso}
in the definition of isolating neighbourhood adopted in \cite{KM95}.
In practice, it is difficult to satisfy condition \eqref{eq:mv-s-iso},
because controlling the size of values of $F$ is either very expensive
or just not possible.

To illustrate the problem, let us go back to the motivating example of \cite{EdJaMr15}.
Take $S^1=\RR/\ZZ$ and, to keep the notation simple,
identify a real number $x\in\RR$ with its equivalence class $[x]\in \RR/\ZZ$.
Consider the self-map
\begin{equation}
\label{eq:f}
   f: S^1\ni x\mapsto 2x\in S^1.
\end{equation}
Let $x_i:=\frac{i}{16}$ and take $A:=\setof{x_i\mid  i=0,1,2,\dots,15}\subset S^1$ as the finite sample.
Since $f(A)\subset A$, the restriction $g:=f_{|A}$ is an exact sample of $f$ on $A$.
Consider the grid $\cA$ on $S^1$ consisting of intervals $[x_i-\frac{1}{32},x_i+\frac{1}{32}]$.
The graph of the multivalued map $F$ obtained via \eqref{eq:F-from-cG} from the smallest combinatorial enclosure
$\cG$ of $g$ on $\cA$ is presented in Figure~\ref{fig:z2on16samples}.
Note that $F$ does not admit a continuous selector.

\begin{ex}\label{e1}
{\em
  Note that $0$ is a hyperbolic fixed point of $f$. Thus, $\{0\}$ is an isolated invariant set of $f$.
  It belongs to $S:=[\frac{31}{32},\frac{1}{32}]\in\cA$. It is straightforward to observe that $S$ is an invariant
  set for $F$. Let $N:=[\frac{15}{16},\frac{1}{16}]$. Since $F(S)=[\frac{27}{32},\frac{5}{32}]$, the set $N$ is not
  a strongly isolating neighbourhood for $F$ and $S$. However, one easily verifies that $N$ is an isolating neighbourhood
  for $F$ and $S$. Also, it is not difficult to verify that $N$ fails to be an isolating neighborhood if the values
  of $F$ are enlarged to make $f$ a continuous selector of $F$.
  We will show in Section \ref{sec:ex} that the Conley index of $S$ for $F$ is the same as the Conley index
  of $\{0\}$ for $f$.
}
\end{ex}

\begin{ex}\label{e2}
{\em
Note that $\{\frac{1}{3},\frac{2}{3}\}$ is a hyperbolic periodic trajectory of $f$. In particular, it is
an isolated invariant set for $f$. Consider the cover of this set by elements of the grid $\cA$ and set
$S:=[\frac{9}{32},\frac{13}{32}]\cup [\frac{19}{32},\frac{23}{32}]$. It is easy to see that $S$ is an invariant set for $F$,
because each point of $S$ belongs to a $2$-periodic trajectory of $F$ in $S$.  We have $F(S)=[\frac{3}{32},\frac{29}{32}]$.
Thus, if $S$ is a strongly isolated invariant set, then any strongly isolating neighbourhood isolating $S$ must contain
$F(S)$. But, $\{\frac{7}{32},\frac{11}{32},\frac{25}{32},\frac{21}{32}\}$ is a $4$-periodic trajectory of $F$ contained
in $F(S)$. Thus, any strongly isolating neighbourhood containing $S$ has its invariant part essentially larger than $S$.
It follows that $S$ is not a strongly isolated invariant set for $F$. However, it is easy to see that
$N:=[\frac{17}{64},\frac{27}{64}]\cup [\frac{37}{64},\frac{47}{64}]$ is an isolating neighbourhood isolating precisely $S$.
Thus, the grid consisting of $16$ equal intervals suffices to pickup $S$ as an isolated invariant but not as a strongly isolated invariant set. We will show in Section \ref{sec:ex} that the Conley index of $S$ for $F$ is the same as the Conley index
of $\{\frac{1}{3},\frac{2}{3}\}$ for $f$.
}
\end{ex}

\begin{figure}[ht]
\begin{center}
\includegraphics[width=0.9\textwidth]{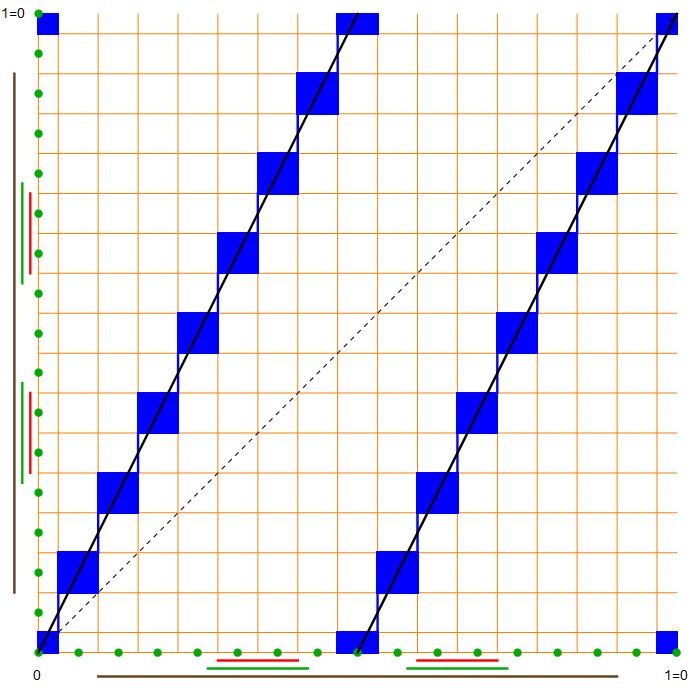}
\caption{The graph of the map $f$ given by \eqref{eq:f}, marked in black, and its sampling.
The $16$ sampled points are marked with green dots. The grid consisting of 16 intervals is marked in orange.
The graph of $F$ constructed from the sampling points via \eqref{eq:F-from-cG} is marked in blue.
A candidate for an isolated invariant set $S$ is marked in red. Its image $F(S)$, showing that $S$ is not a strongly
isolated invariant set, is marked in brown.
An isolating neighbourhood $N$ for $S$ is marked in green.
}
\label{fig:z2on16samples}
\end{center}
\end{figure}

\section{Preliminaries}
We denote the sets of all integers, non-negative integers and non-positive integers by $\Z$, $\Z^+$ and $\Z^-$, respectively. By an interval in $\Z$ we mean a trace of a closed real interval in $\Z$.

Given a topological space $X$ and $A\subset X$, the notation $\cl_XA$, $\Int _XA$ and $\bd _XA$ will be used
respectively for the closure, the interior and the boundary of $A$ in $X$. If the space is clear from the context, we shall drop the subscript $X$ in this notation.

Let $\mathcal{P}(Y)$ stand for the set of all subsets of a given topological space $Y$. A mapping $F:X\to \mathcal{P}(Y)$ is called {\em upper semicontinuous} ({\em usc} for short) if for an arbitrary closed set $B\subset Y$ its {\em large counter image} under $F$, i.e. the set
$$
F^{-1}(B):=\{x\in X\mid  F(x)\cap B \neq\emptyset\},
$$
is closed. It is equivalent to the assertion that the set
$\{x\in X\mid  F(x)\subset B\}
$, called the {\em small counter image} of $B$,
is open for any open $B\subset Y$. Recall that any usc mapping with compact values has a closed graph and it sends compact sets into compact sets. If $F:X\to \mathcal{P}(Y)$ is usc then its {\em effective domain}, i.e. the set $D(F):=\{x\in X\mid  F(x)\neq\emptyset\}$, is closed.

Given $A\subset X$ we define its {\em image} under $F$ as $F(A):=\bigcup\{F(x)\mid x\in A\}$. If $F:X\to\mathcal{P}(Y)$ and $G:Y\to\mathcal{P}(Z)$ then the {\em composition} $G\circ F:X\to\mathcal{P}(Z)$ is defined by
$$
(G\circ F)(x):=\bigcup\{G(y)\mid y\in F(x)\}\for x\in X.
$$
Finally, if $F:X\to\mathcal{P}(X)$ then by $F^k$, for $k\in\Z^+\setminus\{0\}$, we understand the  composition
of $k$ copies of $F$.
\begin{df}
{\rm (cf. \cite[Definition 2.1]{KM95}). An usc mapping $F:X\times\Z\to\mathcal{P}(X)$ with compact values is called a {\em discrete multivalued dynamical system} ({\em dmds}) if the following conditions are satisfied:
\begin{itemize}
\item[(i)] for all $x\in X$, $F(x,0)=\{x\}$,
\item[(ii)] for all $n,m\in\Z$ with $nm\geq 0$ and all $x\in X$, $F(F(x,n),m)=F(x,n+m)$,
\item[(iii)] for all $x,y\in X$, $y\in F(x,-1)\Leftrightarrow x\in F(y,1)$.
\end{itemize}
}
\end{df}
Note that $F^n(x):=F(x,n)$ coincides with a superposition of $F^1:X\to\mathcal{P}(X)$ or its inverse $(F^1)^{-1}$. Therefore we will call $F^1$ the {\em generator} of the dmds $F$. Moreover, we will denote the generator simply by  $F$ and identify it with the dmds.

In what follows we assume that $F$ is a given dmds.

\begin{df}
{\rm (cf. \cite[Definition 2.3]{KM95}). Let $I\subset\Z$ be an interval containing $0$. A single valued mapping $\sigma:I\to X$ is called a {\em solution for $F$ through $x\in X$} if $\sigma (0)=x$ and $\sigma (n+1)\in F(\sigma(n))$ for all $n,n+1\in I$.
}
\end{df}
\begin{df}
\label{def:inv}
{\rm Given $N\subset X$ we define the following sets
\begin{eqnarray*}
\Inv^+N&:=&\{x\in N\mid \exists\,\sigma:\Z^+\to N\mbox{ a solution  for }F\mbox{ through }x\},\\
\Inv^-N&:=&\{x\in N\mid \exists\,\sigma:\Z^-\to N\mbox{ a solution  for }F\mbox{ through }x\},\\
\Inv N&:=&\{x\in N\mid  \exists\,\sigma:\Z\to N\mbox{ a solution  for }F\mbox{ through }x\},
\end{eqnarray*}
called the {\em positive invariant part, negative invariant part} and the {\em invariant part of} $N$, respectively.
}
\end{df}
 Note that, by (i), $\Inv N=\Inv ^+N\cap \Inv ^-N$.

We will frequently consider pairs of topological spaces. For the sake of simplicity we will denote such pairs by single capital letters and then the first or the second element of the pair will be denoted by adding to the letter the subscript $1$ or $2$, respectively. In other words, if $P$ is a pair of spaces then $P=(P_1,P_2)$ where $P_1$, $P_2$ are topological spaces. Consequently, the rule extends to any relation $R$ between pairs $P$ and $Q$, i.e. any statement that pairs $P$ and $Q$ are in a relation $R$ will mean that $P_i$ is in a relation $R$ with $Q_i$ for $i=1,2$. According to our general assumption, whenever we say that $F$ is a map of pairs $P$ and $Q$ it means that $F$ maps $P_i$ into $Q_i$ for $i=1,2$.
\section{Definition of an isolating neighbourhood and construction of a weak index pair}
Let $F:X\to\mathcal{P}(X)$ be a given dmds. We begin this section with the definition of an isolating neighbourhood.
\begin{df}\label{in}
{\rm A compact subset $N\subset X$ is an {\em isolating neighbourhood} for $F$ if $\Inv N\subset \Int N$.
}
\end{df}
Note that the above definition generalizes earlier notions of isolating neighbourhoods for multivalued maps.
Recall that in \cite{KM95}
an isolating neighbourhood $N$ in a locally compact metric space is defined as a compact set satisfying
\begin{equation}\label{inKM}
\dist (\Inv N,\bd N)>\max \{\diam F(x)\mid x\in N\}.
\end{equation}
In \cite{S06} the isolation property has been slightly relaxed i.e. instead of (\ref{inKM}) it is required that
\begin{equation}\label{inKS}
\Inv N\cup F(\Inv N)\subset\Int N.\end{equation}
Note that if $F$ is single valued, conditions (\ref{inKM}), (\ref{inKS}) and the assertion of Definition \ref{in}, coincide. However, as we are going to show below, for a general mv map $F$ this is not the case.
In order to avoid the misunderstanding, throughout this paper isolating neighbourhoods in the sense of \cite{KM95} or \cite{S06} will be called {\em strongly isolating neighbourhoods}.
\begin{prop}\label{prop_strong_in}
Any strongly isolating neighbourhood is an isolating neighbourhood. The converse is not true.
\end{prop}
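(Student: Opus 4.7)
The proof splits naturally into two parts, matching the two assertions.

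For the forward implication, I would treat the two versions of "strongly isolating" separately, since the paper cites both (\ref{inKM}) from \cite{KM95} and (\ref{inKS}) from \cite{S06}. If $N$ satisfies (\ref{inKS}), then $\Inv N \subset \Int N$ holds by construction, so $N$ is an isolating neighbourhood in the sense of Definition~\ref{in} immediately. If $N$ satisfies (\ref{inKM}), note that $\max\{\diam F(x)\mid x\in N\}\geq 0$ because $F$ has compact values, hence $\dist(\Inv N,\bd N)>0$, which forces $\Inv N\cap\bd N=\emptyset$. Since $N$ is compact (in particular closed) and $\Inv N\subset N$, the decomposition $N=\Int N\cup\bd N$ then yields $\Inv N\subset\Int N$. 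Either way, $N$ is an isolating neighbourhood.

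For the converse, it is enough to exhibit a single isolating neighbourhood that is not strongly isolating, and Example~\ref{e1} (or equally Example~\ref{e2}) already supplies exactly such a witness: there $N=[\tfrac{15}{16},\tfrac{1}{16}]$ with $\Inv N=S=[\tfrac{31}{32},\tfrac{1}{32}]$ satisfies $\Inv N\subset\Int N$, yet $F(S)=[\tfrac{27}{32},\tfrac{5}{32}]$ is not contained in $\Int N$, so (\ref{inKS}) fails, and a fortiori (\ref{inKM}) fails. I would therefore close the proof by pointing to Example~\ref{e1} (noting that the verification of the isolation property and the violation of the strong isolation property are carried out in detail there) rather than repeating the computation.

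There is no genuine obstacle here; the statement is essentially a bookkeeping result intended to justify the terminology introduced in Definition~\ref{in}. The only mild subtlety is that the phrase "strongly isolating neighbourhood" covers two inequivalent prior definitions, so the proof must dispose of both, and one should be careful in the (\ref{inKM}) case to use compactness of $F(x)$ to guarantee nonnegativity of the right-hand side before concluding $\Inv N\cap\bd N=\emptyset$.
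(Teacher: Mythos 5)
Your proof is correct and takes essentially the same route as the paper's: the forward implication is exactly the routine unpacking that the authors dismiss as ``obvious'' (your care with $\max\{\diam F(x)\}\geq 0$ and $N=\Int N\cup\bd N$ in the case of \eqref{inKM} is the right way to make it precise), and the converse is witnessed by Example~\ref{e1}, which is precisely the paper's argument.
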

\begin{proof} The first statement is obvious. The second follows from Example \ref{e1}.
\qed\end{proof}
\begin{df}
\label{def:iis}
{\rm We say that a compact set $S\subset X$ is {\em invariant} with respect to $F$ if $S=\Inv S$. It is called an {\em isolated invariant set} if it admits an isolating neighbourhood $N$ for $F$ such that $S=\Int N$. If in the above assertion $N$ is a strongly isolating neighbourhood then $S$ will be called a {\em strongly isolated invariant set}.
}
\end{df}
As a straightforward consequence of Proposition \ref{prop_strong_in} and Example \ref{e2} we have the following.
\begin{prop}
\label{prop:siis-iis}
Any strongly isolated invariant set is an isolated invariant set. The converse is not true.
\end{prop}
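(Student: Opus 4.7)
The plan is direct: the proposition is the invariant-set analogue of Proposition \ref{prop_strong_in}, and the proof should consist of unwinding the definitions for the forward implication and quoting Example \ref{e2} for the counterexample. The hint at the end of Proposition \ref{prop_strong_in}'s proof (``the second follows from Example \ref{e1}'') already signals the intended style.

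For the forward implication, I would start from a strongly isolated invariant set $S$. By Definition \ref{def:iis}, there exists a strongly isolating neighbourhood $N$ with $S=\Inv N$. Proposition \ref{prop_strong_in} tells us that every strongly isolating neighbourhood is also an isolating neighbourhood in the sense of Definition \ref{in}, i.e.\ $\Inv N\subset\Int N$. Hence the same $N$ witnesses $S$ as an isolated invariant set, proving the first sentence.

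For the converse, Example \ref{e2} already supplies the needed counterexample, and I would simply invoke it. Let
\[
S:=\left[\tfrac{9}{32},\tfrac{13}{32}\right]\cup\left[\tfrac{19}{32},\tfrac{23}{32}\right].
\]
The example shows that $S$ is invariant with respect to $F$ (every point of $S$ sits on a $2$-periodic trajectory contained in $S$), that any strongly isolating neighbourhood of $S$ must contain $F(S)$ and hence the extra $4$-periodic trajectory $\{\tfrac{7}{32},\tfrac{11}{32},\tfrac{21}{32},\tfrac{25}{32}\}\subset F(S)\setminus S$, so $S$ cannot be a strongly isolated invariant set, and finally that
\[
N:=\left[\tfrac{17}{64},\tfrac{27}{64}\right]\cup\left[\tfrac{37}{64},\tfrac{47}{64}\right]
\]
is an isolating neighbourhood isolating precisely $S$, i.e.\ $S=\Inv N$ and $\Inv N\subset\Int N$. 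Hence $S$ is an isolated invariant set in the sense of Definition \ref{def:iis} which is not strongly isolated.

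I do not expect any real obstacle here: the whole content of the proposition has already been established upstream. The only mild subtlety is to make sure that the $N$ appearing in Example \ref{e2} really verifies the new isolation condition $\Inv N\subset\Int N$ rather than merely containing $S$ in its interior, but the phrase ``isolating neighbourhood isolating precisely $S$'' in the example is exactly the statement $S=\Inv N\subset\Int N$, so nothing beyond a citation is needed.
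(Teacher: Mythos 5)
Your proposal is correct and matches the paper exactly: the authors derive the proposition as "a straightforward consequence of Proposition \ref{prop_strong_in} and Example \ref{e2}," which is precisely your forward implication via Proposition \ref{prop_strong_in} and your counterexample via Example \ref{e2}. Your write-up merely spells out the details the paper leaves implicit.
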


The main tool in constructing the Conley index for flows as well as for discrete dynamical systems is the index pair.
\begin{df}\label{ip}
{\rm
A pair $P=(P_1,P_2)$ of compact sets $P_2\subset P_1\subset N$ is called an {\em index pair} in $N$ if
\begin{itemize}
\item[(a)] $F(P_i)\cap N\subset P_i$ for $i\in\{1,2\}$,
\item[(b')] $F(P_1\setminus P_2)\subset N$,
\item[(c)] $\Inv N\subset\Int (P_1\setminus P_2)$.
\end{itemize}
}
\end{df}
An index pair appears to be an effective tool also in studying discrete multivalued dynamical systems, when strongly isolating neighbourhoods are considered. However, as Example \ref{ex1} shows, invariant sets isolated in the sense of Definition \ref{in} do not necessarily guarantee the existence of index pairs.
\begin{ex}\label{ex1}
{\rm Consider the dmds $F$, the isolated invariant set $S$ and its isolating neighbourhood $N$, defined in Example \ref{e2}  (see Figure \ref{fig:z2on16samples}).
Suppose that there exists an index pair $P=(P_1,P_2)$ in $N$. Then, by (b') and (c), $[\frac{3}{32},\frac{29}{32}]=F(S)\subset N=[\frac{17}{64},\frac{27}{64}]\cup [\frac{37}{64},\frac{47}{64}]$, a contradiction.
}
\end{ex}
To avoid this obstacle, in what follows we adapt to our needs the notion of a weak index pair introduced in \cite{M06}.
We will prove that for any isolated invariant set a weak index pair exists, and that this is sufficient to well pose the definition of the Conley index.

We define an $F$-{\em boundary} of a given set $A\subset X$ by
$$
\bd _F(A):=\cl A\cap\cl (F(A)\setminus A).
$$
\begin{df}\label{wip}
{\rm A pair $P=(P_1,P_2)$ of compact sets $P_2\subset P_1\subset N$ is called a {\em weak index pair} in $N$ if
\begin{itemize}
\item[(a)] $F(P_i)\cap N\subset P_i$ for $i\in\{1,2\}$,
\item[(b)] $\bd _FP_1\subset P_2$,
\item[(c)] $\Inv N\subset\Int (P_1\setminus P_2)$,
\item[(d)] $P_1\setminus P_2\subset\Int N$.
\end{itemize}
}
\end{df}
The following straightforward proposition shows that, apart from condition (d), the axioms of a weak index pair are less restrictive than those of an index pair.
\begin{prop}
\label{prop:ip-wip}
Any index pair $P$ in $N$ such that $P_1\setminus P_2\subset\Int N$ is a weak index pair in $N$.
\end{prop}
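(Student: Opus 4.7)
The plan is to verify the four axioms (a)--(d) of a weak index pair (Definition \ref{wip}) one by one, starting from the axioms (a), (b') and (c) of an index pair (Definition \ref{ip}) and the extra assumption $P_1\setminus P_2\subset\Int N$. Conditions (a) and (c) of a weak index pair are literally the same as (a) and (c) of an index pair, so they come for free; condition (d) is exactly the extra hypothesis. Hence the whole content of the statement is the verification of (b), namely $\bd_F P_1\subset P_2$.

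To attack (b), I would first simplify $\bd_F P_1=\cl P_1\cap\cl(F(P_1)\setminus P_1)$. Since $P_1$ is compact and hence closed, this becomes $P_1\cap\cl(F(P_1)\setminus P_1)$. The key observation is that $F(P_1)\setminus P_1$ is already disjoint from $N$: intersecting with $N$ and using axiom (a) for $i=1$ gives
\[
(F(P_1)\setminus P_1)\cap N=(F(P_1)\cap N)\setminus P_1\subset P_1\setminus P_1=\emptyset.
\]
Therefore $F(P_1)\setminus P_1\subset X\setminus N$, and since $X\setminus\Int N$ is closed, passing to the closure yields $\cl(F(P_1)\setminus P_1)\subset X\setminus\Int N$.

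Combining these two inclusions, $\bd_F P_1\subset P_1\setminus\Int N$. The added hypothesis $P_1\setminus P_2\subset\Int N$ is equivalent to $P_1\setminus\Int N\subset P_2$, so $\bd_F P_1\subset P_2$ and (b) is established. Condition (b') of Definition \ref{ip} is in fact never used; only (a) and the extra assumption enter the argument.

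I do not foresee any real obstacle: the proof is a direct set-theoretic verification. The two small points worth being careful about are (i) the cancellation $(F(P_1)\cap N)\setminus P_1=\emptyset$ coming from axiom (a), and (ii) the fact that taking closures sends a subset of $X\setminus N$ into $X\setminus\Int N$ rather than only into $\cl(X\setminus N)$ — both are routine.
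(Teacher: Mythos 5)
Your proof is correct and follows essentially the same idea as the paper's: both reduce everything to axiom (b) and both derive it from the observation that axiom (a) forces $F(P_1)\setminus P_1$ to miss $N$, while the extra hypothesis places $P_1\setminus P_2$ inside $\Int N$. The paper phrases this as a proof by contradiction with a sequence $\{y_n\}\subset F(P_1)\setminus P_1$ converging to $y$, whereas your direct set-theoretic inclusion $\cl(F(P_1)\setminus P_1)\subset X\setminus\Int N$ is, if anything, slightly cleaner since it avoids sequences in a general topological space.
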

\begin{proof}
We shall verify that $P$ satisfies condition (b). Suppose the contrary and take a $y\in\bd _F(P_1)\setminus P_2$.
Then, there exists a sequence $\{y_n\}\subset F(P_1)\setminus P_1$ convergent to $y$.
Since $y_n\in F(P_1)$, by (a) we have $y_n\notin N$. However, $y\in P_1\setminus P_2\subset\Int N$,
hence $y_n\in N$ for large enough $n$, a contradiction.
\qed\end{proof}
Given $N\subset X$ and an interval $I$ in $\ZZ$ set
\[
   \Sol(N,F,I):=\setof{\sigma:I\to N\text{ a solution  for $F$}}
\]
and for $x\in N$ and $n\in\Z^+$ define
\begin{eqnarray*}
F_{N,n}(x)&:=& \{y\in N\mid \exists\,\sigma\in\Sol(N,F,[0,n]):\,\sigma(0)=x,\sigma(n)=y\},\\
F_{N,-n}(x)&:=&\{y\in N\mid \exists\,\sigma\in\Sol(N,F,[-n,0]):\,\sigma(-n)=y,\sigma(0)=x\}, \\
F_N^+(x)&:=&\bigcup_{n\in\z^+}F_{N,n}(x),\\
F_N ^-(x)&:=&\bigcup_{n\in\z^+}F_{N,-n}(x).
\end{eqnarray*}

\begin{lm}\label{lem_8}
Let $N\subset X$ be compact and suppose that $D(F_{N,n})\neq\emptyset$ for all $n\in\Z^+$. Then $\Inv N\neq\emptyset$. Moreover, $\Inv ^{(\pm )}N=\bigcap\{D(F_{N,n})\mid n\in\Z \mbox{ }(n\in\Z^+\mbox{, }n\in\Z^-\mbox{, respectively})\}$.
\end{lm}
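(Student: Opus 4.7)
The plan is to reduce the lemma to a compactness argument in the product space $N^{\Z}$ (and its one-sided analogues $N^{\Z^+}$, $N^{\Z^-}$). First I would observe that for every $n\in\Z^+$ the set $D(F_{N,n})$ is compact: the set of admissible partial trajectories
\[
 \Omega_n := \{(x_0,\dots,x_{n}) \in N^{n+1} \mid x_{k+1} \in F(x_k) \mbox{ for } 0 \leq k < n\}
\]
is closed in $N^{n+1}$, because $F$ has closed graph (being usc with compact values, as noted in the preliminaries); since $N^{n+1}$ is compact, $D(F_{N,n})$ is compact as the projection of $\Omega_n$ onto the first coordinate. The case $n\in\Z^-$ is analogous.

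Next I would establish the three equalities in the ``moreover'' statement. Forward inclusions are immediate by restricting a bi-infinite or half-infinite solution to finite intervals. For the reverse bilateral inclusion, given $x\in\bigcap_{n\in\Z}D(F_{N,n})$, for each $m\in\Z^+$ I can concatenate a forward $m$-step solution and a backward $m$-step solution through $x$ in $N$ to obtain a partial solution $\sigma_m:[-m,m]\to N$ through $x$. Extended arbitrarily to $\Z$, each $\sigma_m$ lies in the closed, non-empty, nested decreasing set
\[
 T_m := \{\sigma \in N^{\Z} \mid \sigma(0)=x,\; \sigma(k+1) \in F(\sigma(k)) \mbox{ for } -m \leq k < m\}.
\]
By Tychonoff's theorem $N^{\Z}$ is compact, so $\bigcap_m T_m\neq\emptyset$, and any element of this intersection is a bi-infinite solution through $x$ in $N$. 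The one-sided equalities follow from the same argument with $N^{\Z^{\pm}}$ replacing $N^{\Z}$.

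Finally I would deduce $\Inv N\neq\emptyset$ via a midpoint trick. For each $n\in\Z^+$ the hypothesis yields a solution $\sigma:[0,2n]\to N$; then $\sigma(n)\in D(F_{N,n})\cap D(F_{N,-n})$, because the shifts $k\mapsto\sigma(n+k)$ on $[0,n]$ and on $[-n,0]$ are forward and backward $n$-step solutions through $\sigma(n)$ in $N$, respectively. Any finite subfamily of $\{D(F_{N,n})\}_{n\in\Z}$ is contained in $D(F_{N,n_0})\cap D(F_{N,-n_0})$ for $n_0$ sufficiently large (by nesting via restriction of solutions), so the family has the finite intersection property; compactness of each $D(F_{N,n})$ in $N$ then yields $\bigcap_{n\in\Z} D(F_{N,n})=\Inv N\neq\emptyset$. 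The main obstacle is setting up the compactness argument correctly: closedness of the graph of $F$ is crucial for passing from arbitrarily long partial solutions to a full solution, and the midpoint trick is essential because non-emptiness of $\Inv^+N$ and $\Inv^-N$ alone need not imply $\Inv N\neq\emptyset$, as those two sets need not share a common point.
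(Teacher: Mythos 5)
Your proof is correct, but it reaches the conclusion by a somewhat different mechanism than the paper. The paper gets compactness of $D(F_{N,n})$ by citing upper semicontinuity of $F_{N,n}$ (from \cite{KM95}), whereas you obtain it as the projection of a closed subset of $N^{n+1}$; these are interchangeable. The more substantive difference is in passing from arbitrarily long partial solutions to an infinite one: the paper runs an inductive diagonal-subsequence construction, choosing $\sigma(n+1)$ as a limit of $\sigma_{k_i}(n+1)$ and invoking the closed graph of $F$ at each step, while you package the same compactness into a single application of Tychonoff's theorem to the nested closed sets $T_m\subset N^{\Z}$ of partial solutions. Both are standard; yours is arguably cleaner and avoids the bookkeeping of repeated subsequence extraction. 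Finally, your explicit ``midpoint trick'' is a genuine improvement in exposition: the paper concludes that $\Inv N=\bigcap\{\bigcap_{n=-k}^{k}D(F_{N,n})\mid k\in\Z^+\}$ is nonempty ``as a decreasing sequence of nonempty compact sets'' without justifying why each finite intersection $\bigcap_{n=-k}^{k}D(F_{N,n})$ is nonempty; your observation that the midpoint $\sigma(k)$ of a solution $\sigma\colon[0,2k]\to N$ lies in all $D(F_{N,n})$ with $|n|\le k$ is exactly the missing justification, and your remark that nonemptiness of $\Inv^+N$ and $\Inv^-N$ separately would not suffice correctly identifies why this step is needed.
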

\begin{proof}
We start with the proof for $\Inv ^+N$. By \cite[Proposition 2.7]{KM95}, for each $n\in\Z$, the map
$F_{N,n}$ is usc. This implies that $D(F_{N,n})$ is compact. Thus, the sequence $\{D(F_{N,n})\}_{n\in\z^+}$ has a nonempty intersection $K$ as a decreasing sequence of nonempty compact sets. We shall prove that $\Inv ^+N=K$. Clearly, $\Inv ^+N\subset K$. Suppose that $x\in K$. Then, for each $n\in\Z^+$ there exists a solution $\sigma_n:[0,n]\to N$ for $F$ through $x$. We construct a solution $\sigma:\Z^+\to N$ for $F$ through $x$ by induction. Evidently, $\sigma (0)=x$. Suppose that $\sigma_{|[0,n]}$ is constructed and that there is a sequence $\{k_i\}$ with $k_i>n$ such that $\sigma _{k_i}(n)\to\sigma (n)$ as $i\to\infty$. Since $N$ is compact, passing to the subsequence, if necessary, we may assume that $\sigma _{k_i}(n+1)\to y (n+1)\in N$ as $i\to\infty$. By the closed graph property, $y_{n+1}\in F(\sigma(n))$, hence we may set $\sigma(n+1):=y_{n+1}$.

Now we focus on $\Inv ^-N$. Observe that if $D(F_{N,n})\neq\emptyset$ for all $n\in\Z^+$ then also $D(F_{N,n})\neq\emptyset$ for all $n\in\Z^-$. Indeed, if $\sigma:[0,n]\to N$ is a solution for $F$ through $\sigma(0)$ then $\sigma ':[-n,0]\ni i\mapsto \sigma (i+n)\in N$ is a solution for $F$ through $\sigma ' (0)$. The remaining part of the proof for $\Inv ^-N$ runs similarly as for $\Inv ^+N$.

Since $\Inv N=\Inv ^+N\cap\Inv ^-N$, we infer that $\Inv N=\bigcap\{D(F_{N,n})\mid n\in\Z\}$. By the equality $\bigcap\{D(F_{N,n})\mid n\in\Z\}=\bigcap\{\bigcap_{n=-k} ^kD(F_{N,n})\mid k\in\Z^+\}$, the set $\Inv N$ is nonempty as a decreasing sequence of nonempty compact sets.
\qed\end{proof}
For the reader's convenience let us quote the following two lemmas from \cite{KM95}.
\begin{lm}\label{l_2.9}
{\rm (cf. \cite[Lemma 2.9]{KM95})} Let $N\subset X$ be compact. Then
\begin{itemize}
\item[(i)] The sets $\Inv ^+N$, $\Inv ^-N$ and $\Inv N$ are compact;
 \item[(ii)] If $A$ is compact with $\Inv ^-N\subset A\subset N$ then $F_N ^+ (A)$ is compact.
\end{itemize}
\end{lm}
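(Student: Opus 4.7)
For part (i), my plan is to combine the characterization established inside the proof of Lemma~\ref{lem_8} with the basic facts about upper semicontinuity recalled in the Preliminaries. Inspecting that proof one sees that the identification $\Inv^+N=\bigcap_{n\in\Z^+}D(F_{N,n})$ does not actually need the nonemptyness hypothesis of Lemma~\ref{lem_8}; the same reasoning applies to $\Inv^-N$, and $\Inv N=\Inv^+N\cap\Inv^-N$ by Definition~\ref{def:inv}. By \cite[Proposition~2.7]{KM95} each iterate $F_{N,n}$ is usc, so its effective domain $D(F_{N,n})$ is closed; contained in the compact set $N$, it is compact. Hence each of $\Inv^+N$, $\Inv^-N$ and $\Inv N$ is an intersection of closed subsets of the compact set $N$, and therefore compact.

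For part (ii), since $F_N^+(A)\subset N$ it suffices to prove closedness. Take any sequence $y_k\to y$ with $y_k\in F_N^+(A)$ and, for each $k$, select $n_k\in\Z^+$ together with a solution $\sigma_k\in\Sol(N,F,[0,n_k])$ satisfying $\sigma_k(0)\in A$ and $\sigma_k(n_k)=y_k$. I would distinguish two cases according to whether $\{n_k\}$ is bounded. If it is, after passing to a subsequence one may assume $n_k=n$ is constant, so $y_k\in F_{N,n}(A)$; since $F_{N,n}$ is usc with compact values and $A$ is compact, the image $F_{N,n}(A)$ is compact (Preliminaries), giving $y\in F_{N,n}(A)\subset F_N^+(A)$. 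If $\{n_k\}$ is unbounded one may assume $n_k\to\infty$, and reverse-index the solutions by $\tau_k(j):=\sigma_k(j+n_k)$ for $j\in[-n_k,0]$, so that $\tau_k(0)=y_k\to y$ and $\tau_k(-n_k)\in A$. A standard diagonal extraction using compactness of $N$ produces, for every $j\in\Z^-$, a limit $\tau(j)\in N$, and the closed graph property of $F$ propagates the relation $\tau_k(j+1)\in F(\tau_k(j))$ to the limit, so that $\tau:\Z^-\to N$ is a solution for $F$ through $y$. Thus $y\in\Inv^- N$, and the hypothesis $\Inv^- N\subset A$ together with $A=F_{N,0}(A)\subset F_N^+(A)$ yields $y\in F_N^+(A)$.

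The main obstacle I anticipate is the unbounded case of (ii): the diagonal extraction alone only delivers a backward solution through $y$, and without the assumption $\Inv^- N\subset A$ nothing prevents $y$ from sitting in $\Inv^- N\setminus A$, hence outside $F_N^+(A)$. Thus the role of the containment $\Inv^- N\subset A$ is precisely to close the gap between limit points of $F_N^+(A)$ and members of $F_N^+(A)$, and isolating this use will be the pivotal step in the write-up; the remaining ingredients are the standard usc toolkit (closed graph, compactness of $D(F_{N,n})$ and $F_{N,n}(A)$) together with the diagonal argument already familiar from the proof of Lemma~\ref{lem_8}.
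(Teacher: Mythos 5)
The paper does not actually prove this lemma---it is quoted from \cite[Lemma 2.9]{KM95} for the reader's convenience---so there is no internal proof to compare against; judged on its own, your argument is correct. Part (i) via the unconditional identification $\Inv^{\pm}N=\bigcap_{n}D(F_{N,n})$ together with closedness of effective domains of usc maps, and part (ii) via the bounded/unbounded dichotomy, where the diagonal extraction in the unbounded case yields a backward solution through the limit point and the hypothesis $\Inv^-N\subset A$ is precisely what converts that limit point into an element of $F_N^+(A)$, is exactly the standard argument of \cite{KM95}, and your identification of the role of $\Inv^-N\subset A$ as the pivotal step is on target.
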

\begin{lm}\label{l_2.10}{\rm
(cf. \cite[Lemma 2.10]{KM95})} Let $K\subset N$ be compact subsets of $X$ such that $K\cap\Inv ^+N=\emptyset$ {\rm (}respectively $K\cap\Inv ^-N=\emptyset${\rm )}. Then
\begin{itemize}
\item[(i)] $F_{N,n}(K)=\emptyset$ for all but finitely many $n>0$ {\rm (}respectively $n<0${\rm )};
 \item[(ii)] The mapping $F_N ^+$ (respectively $F_N ^-${\rm )} is usc on $K$;
 \item[(iii)] $F_N ^+ (K)\cap\Inv ^+N=\emptyset$ {\rm (}respectively $F_N ^- (K)\cap\Inv ^-N=\emptyset${\rm )}.
\end{itemize}
\end{lm}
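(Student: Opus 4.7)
My plan is to handle only the positive case $K\cap\Inv^+N=\emptyset$; the case $K\cap\Inv^-N=\emptyset$ is entirely analogous by time reversal (using the symmetry built into axiom (iii) of a dmds).

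For (i) I would argue by contradiction. Suppose there is a strictly increasing sequence $n_k\in\Z^+$ with $F_{N,n_k}(K)\neq\emptyset$. Pick $x_k\in K$ and solutions $\sigma_k\in\Sol(N,F,[0,n_k])$ with $\sigma_k(0)=x_k$. For each fixed $j\in\Z^+$ and all sufficiently large $k$ (namely $n_k\geq j$), the point $\sigma_k(j)$ lies in the compact set $N$, so a standard nested (diagonal) extraction produces a subsequence along which $\sigma_k(j)$ converges to some $\sigma(j)\in N$ for every $j\in\Z^+$ simultaneously. The closed graph property of the usc map $F$ converts the relations $\sigma_k(j+1)\in F(\sigma_k(j))$ into $\sigma(j+1)\in F(\sigma(j))$, so $\sigma\colon\Z^+\to N$ is a solution for $F$. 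Compactness of $K$ gives $\sigma(0)=\lim x_k\in K$, hence $\sigma(0)\in K\cap\Inv^+N$, contradicting the hypothesis. This is the same extraction argument already used in the proof of Lemma~\ref{lem_8}, just with a varying basepoint, so I expect no new obstacle here.

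Assuming (i), parts (ii) and (iii) are cheap. For (ii), (i) furnishes an integer $M$ with $F_{N,n}(K)=\emptyset$ for all $n>M$; hence on $K$ we have $F_N^+=\bigcup_{n=0}^M F_{N,n}$. Each $F_{N,n}$ is usc by \cite[Proposition 2.7]{KM95}, and for any closed $B\subset X$ the large counter image $\bigl(\bigcup_{n=0}^M F_{N,n}\bigr)^{-1}(B)=\bigcup_{n=0}^M F_{N,n}^{-1}(B)$ is a finite union of closed sets, hence closed, so $F_N^+$ is usc on $K$. For (iii), if $y\in F_N^+(K)\cap\Inv^+N$, choose $x\in K$, $n\in\Z^+$, and $\sigma_1\in\Sol(N,F,[0,n])$ with $\sigma_1(0)=x$, $\sigma_1(n)=y$, together with $\sigma_2\in\Sol(N,F,\Z^+)$ with $\sigma_2(0)=y$. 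Concatenating $\sigma_1$ with the shift of $\sigma_2$ yields a solution $\Z^+\to N$ through $x$, so $x\in K\cap\Inv^+N$, a contradiction.

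The only genuine work is the compactness-plus-closed-graph extraction in step (i); items (ii) and (iii) are purely formal consequences. Since the extraction is essentially identical to the one already carried out in Lemma~\ref{lem_8}, the proof reduces to importing that argument with the basepoint $x$ replaced by a convergent subsequence of basepoints $x_k\in K$, plus the two short bookkeeping steps above.
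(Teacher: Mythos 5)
Your proof is correct. Note, however, that there is no in-paper argument to compare it against: the paper explicitly quotes this lemma from \cite[Lemma 2.10]{KM95} for the reader's convenience and gives no proof. Your three steps --- the nested/diagonal extraction combined with the closed-graph property of $F$ for (i), the observation that on $K$ the map $F_N^+$ reduces to a finite union of the usc maps $F_{N,n}$ for (ii), and the concatenation of solutions for (iii) --- constitute the standard argument, and the extraction in (i) is indeed the same device the paper itself employs in the proof of Lemma~\ref{lem_8}; the reduction of the negative case to the positive one via the shift $i\mapsto i+n$ is likewise the same symmetry invoked there.
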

The main result of this section reads as follows.
\begin{thm}
\label{thm:existence}
Let $N$ be an isolating neighbourhood for $F$. For every neighbourhood $W$  of $\Inv N$ there exists a weak index pair $P$
in $N$ such that $P_1\setminus P_2\subset W$.
\end{thm}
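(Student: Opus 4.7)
The plan is to imitate the classical index pair construction from \cite{KM95}, adapted to our weaker isolation assumption, replacing axiom (b$'$) of Definition~\ref{ip} by the weaker axiom (b) of Definition~\ref{wip} and adding the side condition (d). I will build $P_1$ and $P_2$ from a pair of nested compact neighbourhoods of $\Inv N$ inside $W\cap\Int N$.

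First, using compactness of $\Inv N$ (Lemma~\ref{l_2.9}(i)), the assumption $\Inv N\subset\Int N$, and the fact that $W$ is an open neighbourhood of $\Inv N$, I pick compact neighbourhoods $L$ and $K$ of $\Inv N$ satisfying
\[
\Inv N\subset\Int L\subset L\subset\Int K\subset K\subset W\cap\Int N;
\]
the ``buffer'' region $K\setminus\Int L$ will serve to separate $\Inv N$ from the points pushed into $P_2$. I would then take $P_1:=F_N^+(L)$, the set of points in $N$ reachable by a finite forward $F$-solution in $N$ starting in $L$, which is positively $F$-invariant in $N$ by construction, and $P_2:=F_N^+(P_1\setminus\Int K)$, which is also positively $F$-invariant in $N$ and contained in $P_1$ by positive invariance of $P_1$. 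Compactness of both sets will follow from Lemmas~\ref{l_2.9}(ii) and~\ref{l_2.10}, possibly after passing to a closure and reverifying positive invariance via the closed-graph property of usc maps with compact values.

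Axiom (a) is then immediate from positive invariance in $N$. Axiom (b) follows since positive invariance of $P_1$ forces $F(P_1)\setminus P_1\subset X\setminus N$, giving
\[
\bd_F P_1\subset P_1\cap\bd N\subset P_1\setminus\Int K\subset P_2,
\]
where the middle inclusion uses $K\subset\Int N$. Axiom (d) follows from
\[
P_1\setminus P_2\subset P_1\setminus(P_1\setminus\Int K)=P_1\cap\Int K\subset\Int K\subset\Int N\cap W.
\]

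The hard part will be axiom (c): I need $\Inv N\subset\Int(P_1\setminus P_2)$, and in particular $P_2\cap\Inv N=\emptyset$. The core difficulty is to exclude forward $F$-solutions in $N$ that start in $P_1\setminus\Int K$ and later enter $\Inv N$, a phenomenon not automatically ruled out by $\Inv N\subset\Int N$ alone in the multivalued setting. I expect this to be handled by a compactness/diagonal argument: if no choice of sufficiently shrunken neighbourhoods $L\subset K$ worked, I could extract, using the closed-graph property of $F$ and the characterisation of $\Inv N$ as $\bigcap_n D(F_{N,n})$ from Lemma~\ref{lem_8}, a limit bi-infinite $F$-solution in $N$ meeting $N\setminus\Int K$, which would necessarily lie in $\Inv N$ and contradict $\Inv N\subset\Int L\subset\Int K$. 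Pinning down this limit argument cleanly, and using it to refine $L$ and $K$ if necessary, is the heart of the proof; once it is in place, the construction yields the desired weak index pair with $P_1\setminus P_2\subset W$.
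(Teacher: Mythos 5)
Your construction has the right overall shape---it is the $F_N^+$-based construction of \cite[Theorem 2.6]{KM95}, which is indeed what the paper uses---but you centre both auxiliary neighbourhoods $L\subset K$ on $\Inv N$, and this is where the argument breaks. The paper instead takes an \emph{asymmetric} pair: a compact neighbourhood $A$ of the \emph{negative} invariant part $\Inv^-N$ with $F_N^+(A)\subset V$ (this uses \cite[Lemma 2.11]{KM95}), and an open neighbourhood $U$ of the \emph{positive} invariant part $\Inv^+N$, with $U\cap V\subset W\cap\Int N$; then $P_1:=F_N^+(A)$ and $P_2:=F_N^+(P_1\setminus U)$. This asymmetry is exactly what makes the quoted lemmas bite: Lemma~\ref{l_2.9}(ii) gives compactness of $F_N^+(A)$ only because $\Inv^-N\subset A$, and Lemma~\ref{l_2.10} gives compactness of $P_2$, finiteness of the union defining it, and $P_2\cap\Inv^+N=\emptyset$ only because $P_1\setminus U$ is compact and disjoint from $\Inv^+N$. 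With your choice, $P_1\setminus\Int K$ need not be disjoint from $\Inv^+N$ (a point may carry a full forward solution in $N$ without lying anywhere near $\Inv N$), so none of Lemma~\ref{l_2.10}(i)--(iii) applies: your $P_2$ is then an infinite union of compacta whose closedness is unclear, and nothing prevents a forward solution in $N$ from starting in $P_1\setminus\Int K$ and re-entering $\Inv N$, which would put points of $\Inv N$ into $P_2$ and destroy axiom (c). Likewise $F_N^+(L)$ need not be compact when $L\not\supset\Inv^-N$: limit points of $F_{N,n_k}(L)$ with $n_k\to\infty$ lie in $\Inv^-N$, which your $L$ need not contain.

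You do flag (c) as the hard part, but the proposed repair---a diagonal/limit argument combined with shrinking $L$ and $K$---is only a sketch, and even if completed it would address only the recurrence issue, not the compactness of $P_1$ and $P_2$; moreover ``passing to a closure'' does not obviously preserve positive invariance, since for a usc map $F(\cl A)$ may be strictly larger than $\cl F(A)$. The clean fix is to replace $L$ by a compact neighbourhood of $\Inv^-N$ and $\Int K$ by an open neighbourhood $U$ of $\Inv^+N$ as above; then (a), (b), (d) go through essentially as you wrote them, and (c) follows at once because $P_1$ is a neighbourhood of $\Inv^-N$ while $N\setminus P_2$ is a neighbourhood of $\Inv^+N$ by Lemma~\ref{l_2.10}(iii), so that $P_1\setminus P_2=P_1\cap(N\setminus P_2)$ is a neighbourhood of $\Inv^-N\cap\Inv^+N=\Inv N$.
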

\begin{proof}
Let $U,V$ be open in $N$ such that $\Inv ^+N\subset U$, $\Inv ^-N\subset V$ and $U\cap V\subset W\cap\Int N$. By \cite[Lemma 2.11]{KM95} there exists a compact neighbourhood $A$ of $\Inv ^-N$ such that $F_N ^+(A)\subset V$. We define
$$
P_1:=F_N ^+(A),\; P_2:=F_N ^+(P_1\setminus U).
$$
The proof that $P_1,P_2$ are compact, $P_1\setminus P_2\subset W$, $P_2\subset P_1$ and  (a), (c) are satisfied is analogous as in \cite[Theorem 2.6]{KM95}. We present it here for the sake of completeness. Since $P_1\subset V$ and $P_1\setminus U\subset P_2$, we have $P_1\setminus P_2\subset U$ and consequently $P_1\setminus P_2\subset U\cap V\subset W$.
$P_1$ is compact by Lemma \ref{l_2.9} (ii). Since $P_1\setminus U$ is compact, by Lemma \ref{l_2.10} (ii) so is $P_2$. Clearly, $P_2\subset F_N ^+(A)\subset P_1$.

To verify (a) take $x\in P_i$ and $y\in F(x)\cap N$. Then, there exists a solution $\sigma:[0,n]\to N$ with $\sigma (n)=x$ and $\sigma (0)\in A$ in the case $i=1$ or $\sigma (0)\in P_1\setminus U$ in the case $i=2$. One may extend $\sigma$ to $[0,n+1]$ by putting $\sigma (n+1):=y$. Hence $y\in P_i$.

In order to verify (c) observe that $P_1$ is a neighbourhood of $\Inv ^-N$ and, by Lemma \ref{l_2.10} (iii),
the set $N\setminus P_2$ is a neighbourhood of $\Inv ^+N$. Therefore, $P_1\setminus P_2=P_1\cap (N\setminus P_2)$ is a neighbourhood of $\Inv ^-N\cap\Inv ^-N=\Inv N$.

We still need to prove (b) and (d).
Since $P_1\setminus P_2\subset W$ and $W \subset\Int N$, condition (d) is verified.
In order to prove (b) ssume to contrary that there exists an $x\in\bd _FP_1\setminus P_2$. Then $x\in P_1\cap U\subset V\cap U\subset \Int N$. Since $x\in \bd _FP_1$, there exists an $\bar{x}\in\Int N\cap F(P_1)\setminus P_1$. Let $u\in P_1$ be such that $\bar{x}\in F(u)$ and let $\sigma :[0,k]\to N$ be a solution such that $\sigma(0)\in A$ and $\sigma (k)=u$. Since $\bar{x}\in N\cap F(u)$, we may extend $\sigma $ to $[0,k+1]$ by setting $\sigma (k+1):=\bar{x}$. This shows that $\bar{x}\in F_N ^+(A)=P_1$, a contradiction.
\qed\end{proof}
\section{Properties of weak index pairs}
In this section we present several properties of weak index pairs that will be used for the construction of the Conley index.
\begin{lm}\label{int3}
Let $S$ be an isolated invariant set, $N$ an isolating neighbourhood of $S$ and $P=(P_1,P_2)$ a weak index pair in $N$. If $M\subset N$ is an isolating neighbourhood for $S$ such that $P_1\setminus P_2\subset \Int M$, then $Q:=P\cap M$ is a weak index pair in $M$.
\end{lm}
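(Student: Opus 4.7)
\smallskip

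The plan is to verify the four axioms of Definition~\ref{wip} for $Q=(Q_1,Q_2):=(P_1\cap M, P_2\cap M)$ in $M$. Compactness of the $Q_i$ and the inclusions $Q_2\subset Q_1\subset M$ are immediate. The single most useful identity is
\[
  Q_1\setminus Q_2=(P_1\cap M)\setminus(P_2\cap M)=(P_1\setminus P_2)\cap M,
\]
which, combined with the standing hypothesis $P_1\setminus P_2\subset\Int M$, gives $Q_1\setminus Q_2=P_1\setminus P_2$, so axiom (d) is automatic and $Q_1\setminus Q_2\subset\Int M$.

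For axiom (a), given $x\in Q_i$ and $y\in F(x)\cap M$, I would use $M\subset N$ to write $y\in F(P_i)\cap N$, then invoke axiom (a) for the weak index pair $P$ in $N$ to conclude $y\in P_i$, and finally intersect with $M$ to land in $Q_i$. This step is routine.

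Axiom (b) is the main point of the lemma, and where the hypothesis $P_1\setminus P_2\subset\Int M$ does its work. I would argue by contradiction: suppose $y\in\bd_F Q_1\setminus Q_2$. Compactness of $Q_1$ gives $y\in Q_1$, and since $y\notin Q_2=P_2\cap M$ and $y\in M$, we get $y\in P_1\setminus P_2$, hence $y\in\Int M$ by the hypothesis. Pick a sequence $y_n\to y$ with $y_n\in F(Q_1)\setminus Q_1$. Since $y\in\Int M$, eventually $y_n\in M$. But then $y_n\in F(Q_1)\cap M\subset F(P_1)\cap N$, so axiom (a) for $P$ gives $y_n\in P_1$, and hence $y_n\in P_1\cap M=Q_1$, contradicting $y_n\notin Q_1$. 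The main obstacle here is recognizing that the hypothesis $P_1\setminus P_2\subset\Int M$ is precisely what is needed to push the approximating points inside $M$; without it, axiom (a) for $P$ in $N$ could not be brought to bear on the $y_n$.

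For axiom (c), I would use the fact that $M$ and $N$ both isolate $S$, so $\Inv M=S=\Inv N$. By axiom (c) for $P$ in $N$ and by the definition of isolating neighbourhood for $M$, each $x\in S$ has an open neighbourhood in $X$ contained in $P_1\setminus P_2$ and another contained in $\Int M$; intersecting them yields an open set contained in $(P_1\setminus P_2)\cap M=Q_1\setminus Q_2$, which witnesses $x\in\Int(Q_1\setminus Q_2)$. This completes the verification.
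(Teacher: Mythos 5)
Your proof is correct and follows essentially the same route as the paper's: the identity $Q_1\setminus Q_2=(P_1\setminus P_2)\cap M=P_1\setminus P_2$ disposes of (c) and (d), and the hypothesis $P_1\setminus P_2\subset\Int M$ drives the contradiction in (b) by forcing the approximating points into $M$. The only cosmetic difference is in (b): the paper splits $F(Q_1)\setminus Q_1$ into a part controlled by axiom (b) of $P$ and a part controlled by the interior argument, whereas you handle all approximating points at once via axiom (a) of $P$ alone, a slight streamlining.
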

\begin{proof}
By the property (a) of $P$ we have $F(Q_i)\cap N\subset F(P_i)\cap N\subset P_i$.
Thus $F(Q_i)\cap M\subset P_i\cap M=Q_i$ which shows that $Q$ satisfies (a).

Observe that $\bd _F(Q_1)\subset P_1\cap\cl (F(P_1)\setminus (P_1\cap M))\subset P_1\cap\cl (F(P_1)\setminus P_1)\cup P_1\cap\cl (F(P_1)\setminus M)$ which, according to (b) satisfied by $P$, yields
\begin{equation}\label{2}
\bd _F(Q_1)\subset P_2\cup P_1\cap\cl (F(P_1)\setminus M).
\end{equation}
For the indirect proof of (b) for $Q$ suppose the contrary, i.e. assume there exists a  $y\in \bd _F(Q_1)\setminus Q_2$. Then $y\in M$ and, by (\ref{2}), $y\in P_1\cap\cl (F(P_1)\setminus M)$. Thus, we have a sequence $\{y_n\}\subset F(P_1)\setminus M$ with $y=\lim_{n\to\infty}y_n$. However, $y\in P_1\setminus P_2\subset\Int M$ which means that $y_n\in \Int M$ for large enough $n\in\N$ and brings a contradiction.

Observe that $Q_1\setminus Q_2=(P_1\setminus P_2)\cap M$ which, along with $P_1\setminus P_2\subset\Int M$, implies $Q_1\setminus Q_2=P_1\setminus P_2$. Thus conditions (c) and (d) for $Q$ follow.
\qed\end{proof}
\begin{lm}\label{l_bd}
If $P$ is a weak index pair in $N$ then $\bd _F(P_1)\subset\bd N$.
\end{lm}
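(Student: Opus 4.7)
The plan is to unfold the definition of $\bd_F$ and argue by contradiction using axioms (a) of a weak index pair, in the spirit of the indirect arguments already carried out in Lemma~\ref{int3} and Proposition~\ref{prop:ip-wip}.

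First, I would fix $y\in\bd_F(P_1)$. Since $P_1$ is compact, hence closed, the definition $\bd_F(P_1)=\cl P_1\cap\cl(F(P_1)\setminus P_1)$ gives $y\in P_1\subset N$. It therefore suffices to exclude the case $y\in\Int N$, because then the only alternative in $N$ is $y\in\bd N$. Note that we do \emph{not} need axiom (b) of a weak index pair here; only axiom (a) will enter, together with the closedness of $P_1$.

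Next, assume for contradiction that $y\in\Int N$. From $y\in\cl(F(P_1)\setminus P_1)$ choose a sequence $\{y_n\}\subset F(P_1)\setminus P_1$ with $y_n\to y$. Since $\Int N$ is open and $y\in\Int N$, for all sufficiently large $n$ we have $y_n\in\Int N\subset N$. Thus $y_n\in F(P_1)\cap N$, and axiom (a) of Definition~\ref{wip} applied to $i=1$ yields $y_n\in P_1$, contradicting the choice $y_n\in F(P_1)\setminus P_1$.

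There is essentially no obstacle: the result is a direct consequence of axiom (a) once one observes that any accumulation point of $F(P_1)\setminus P_1$ lying in the interior of $N$ would force the sequence to reenter $P_1$. The only minor care needed is the initial remark that $y\in\cl P_1=P_1\subset N$, which places $y$ in $N$ and reduces the claim $y\in\bd N$ to the exclusion of $y\in\Int N$.
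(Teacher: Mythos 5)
Your proposal is correct and follows essentially the same route as the paper's proof: both assume $y\in\bd_F(P_1)$ lies in $\Int N$ (the paper phrases this as $y\notin\bd N$, which combined with $y\in\cl P_1=P_1\subset N$ gives $y\in\Int N$), pick a sequence in $F(P_1)\setminus P_1$ converging to $y$, observe it eventually enters $N$, and derive a contradiction from axiom (a). Your extra remark that only axiom (a) and the closedness of $P_1$ are needed is accurate and matches the paper's argument.
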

\begin{proof}
Suppose, for contradiction, that there exists a $y\in \bd _F(P_1)\setminus\bd N$ and consider
a sequence $\{y_n\}\subset F(P_1)\setminus P_1$ with $y=\lim_{n\to\infty}y_n$. Notice that $y\in\Int N$. Thus, for almost all $n\in\N$ we have $y_n\in N$, and property (a) of $P$ yields $y_n\in P_1$, a contradiction.
\qed\end{proof}
\begin{lm}\label{inter1}
If $P$ and $Q$ are weak index pairs in $N$ then so is $P\cap Q$.
\end{lm}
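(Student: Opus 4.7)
Write $R:=P\cap Q$, so $R_i=P_i\cap Q_i$ for $i=1,2$. The plan is to verify the four axioms (a)--(d) of Definition~\ref{wip} one at a time for $R$, deducing three of them by elementary set manipulation from the corresponding axioms for $P$ and $Q$, and spending the real work on (b).

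For (a), monotonicity of $F$ and axiom (a) for $P$ and $Q$ separately give $F(R_i)\cap N\subset F(P_i)\cap N\subset P_i$ and analogously $F(R_i)\cap N\subset Q_i$, so $F(R_i)\cap N\subset R_i$. For (d), if $y\in R_1\setminus R_2$ then $y\notin P_2\cap Q_2$, hence $y\notin P_2$ or $y\notin Q_2$; in either case axiom (d) for $P$ or $Q$ respectively places $y$ in $\Int N$. For (c) I would use the elementary inclusion $(P_1\setminus P_2)\cap(Q_1\setminus Q_2)\subset R_1\setminus R_2$ (since a point avoiding both $P_2$ and $Q_2$ certainly avoids $P_2\cap Q_2$); combining with axiom (c) for $P$ and $Q$ gives $\Inv N\subset\Int(P_1\setminus P_2)\cap\Int(Q_1\setminus Q_2)\subset\Int(R_1\setminus R_2)$.

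The main obstacle is (b), because a priori $\bd_F R_1$ need not sit inside either $\bd_F P_1$ or $\bd_F Q_1$: a sequence $y_n\in F(R_1)\setminus R_1$ converging to $y$ may fail $R_1$ by missing $P_1$ for some indices and missing $Q_1$ for others, and even when it misses $P_1$ it need not avoid $Q_1$. I would argue by contradiction. Take $y\in\bd_F R_1$ with $y\notin R_2$, so $y\notin P_2$ (without loss of generality) while $y\in R_1\subset P_1$; then axiom (d) for $P$ yields $y\in\Int N$. Pick a sequence $\{y_n\}\subset F(R_1)\setminus R_1$ with $y_n\to y$. For $n$ large, $y_n\in\Int N\subset N$. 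But then $y_n\in F(R_1)\subset F(P_1)$ combined with axiom (a) for $P$ forces $y_n\in P_1$, while $y_n\in F(R_1)\subset F(Q_1)$ combined with axiom (a) for $Q$ forces $y_n\in Q_1$. Hence $y_n\in P_1\cap Q_1=R_1$, contradicting $y_n\notin R_1$. Therefore $y\in R_2$, which finishes (b).

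The key insight powering the hard step is that the two axioms (a) and (d) for the \emph{individual} pairs interact: (d) pushes the limit point into $\Int N$, after which (a) --- applied to $P$ and to $Q$ separately --- drags the tail of the sequence back into both $P_1$ and $Q_1$ simultaneously. Once that is seen, no further ingredient (in particular no use of $\Inv N$ or of Lemma~\ref{l_bd}) is required.
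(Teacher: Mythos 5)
Your proof is correct, and while the verification of axioms (a), (c) and (d) coincides with the paper's, your treatment of the key axiom (b) follows a genuinely different route. The paper splits the sequence $\{y_n\}\subset F(R_1)\setminus R_1$ (passing to a subsequence so that, say, every $y_n$ misses $P_1$), concludes $y\in\bd_F(P_1)$, invokes axiom (b) of $P$ to place $y$ in $P_2$, then uses axiom (d) of $Q$ together with Lemma~\ref{l_bd} ($\bd_F(P_1)\subset\bd N$) to reach the contradiction $y\in\Int N\cap\bd N$. You instead apply the without-loss-of-generality reduction to the limit point rather than to the sequence ($y\notin P_2$, say), use axiom (d) of $P$ to push $y$ into $\Int N$, and then let axiom (a) of $P$ and of $Q$ \emph{separately} absorb the tail of the sequence into $P_1\cap Q_1=R_1$. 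Your argument is the more economical one: it needs no subsequence extraction, no appeal to Lemma~\ref{l_bd}, and --- notably --- no use of axiom (b) of either $P$ or $Q$. In effect you are re-proving Lemma~\ref{l_bd} for the pair $R$ itself, whose positive invariance in $N$ you have just established in step (a); this makes visible that for compact pairs $P_2\subset P_1\subset N$ axiom (b) is a formal consequence of (a) and (d), a fact already implicit in the paper's Proposition~\ref{prop:ip-wip} and Lemma~\ref{l_bd} but not exploited in its proof of this lemma. The paper's version, on the other hand, localizes the failure to one of the two original pairs and leans on their already-verified properties, which keeps the argument uniform with the style of the surrounding lemmas. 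Both are complete; the only (shared, harmless) implicit assumptions are that compact sets are closed and that closure points are sequential limits, which hold in the locally compact metric setting the paper works in.
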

\begin{proof}
Applying property (a) of $P$ we can write $F(P_i\cap Q_i)\cap N\subset F(P_i)\cap N\subset P_i$. By the symmetry with respect to $P$ and $Q$ we also have $F(P_i\cap Q_i)\cap N\subset Q_i$. The above inclusions prove condition (a) for $P\cap Q$.

In order to verify that $P\cap Q$ satisfies (b) suppose the contrary and consider a $y\in \bd _F(P_1\cap Q_1)\setminus (P_2\cap Q_2)$. Then, $y\in P_1\cap Q_1$. Moreover, there exists a sequence $\{y_n\}\subset F(P_1\cap Q_1)\setminus (P_1\cap Q_1)$ such that $\lim_{n\to\infty}y_n=y$. Without loss of generality we may assume that either $\{y_n\}\cap P_1=\emptyset$ or $\{y_n\}\cap Q_1=\emptyset$. Since the above cases are analogous let us assume that $\{y_n\}\cap P_1=\emptyset$. Then, $y_n\in F(P_1)\setminus P_2$ and $y=\lim_{n\to\infty}y_n\in \cl (F(P_1)\setminus P_2)$. Since $y\in P_1$ it follows that $y\in \bd _F(P_1)$ which along with property (b) of $P$ results with $y\in P_2$. Consequently, $y\in Q_1\setminus Q_2\subset \Int N$ as $y\notin P_2\cap Q_2$ and $y\in Q_1$. However, by Lemma \ref{l_bd}, $y\in\bd N$, a contradiction.

By property (c) of $P$ and $Q$ we have $\Inv N\subset\Int (P_1\setminus P_2)\cap \Int (Q_1\setminus Q_2)=\Int (P_1\cap Q_1\setminus (P_2\cup Q_2))\subset \Int (P_1\cap Q_1\setminus (P_2\cap Q_2))$, hence $P\cap Q$ satisfies (c).

We also have (d). Indeed, $P_1\cap Q_1\setminus P_2\cap Q_2=(P_1\setminus P_2)\cap Q_1\cup (Q_1\setminus Q_2)\cap P_1\subset (P_1\setminus P_2)\cup (Q_1\setminus Q_2)\subset\Int N$.
\qed\end{proof}
\begin{lm}\label{inter2}
If $P\subset Q$ are weak index pairs in $N$ then so are $(P_1,P_1\cap Q_2)$ and $(P_1\cup Q_2,Q_2)$.
\end{lm}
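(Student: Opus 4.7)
The plan is to verify the four axioms (a)--(d) of Definition~\ref{wip} for each of the two candidate pairs, treating them in turn and exploiting the inclusion $P_2\subset Q_2$ (which follows from $P\subset Q$) throughout.

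For $R:=(P_1,P_1\cap Q_2)$ the verification is essentially a bookkeeping exercise. Axiom (a) for $R_1=P_1$ is inherited from $P$, while for $R_2=P_1\cap Q_2$ it follows by intersecting the two inclusions $F(P_1)\cap N\subset P_1$ and $F(Q_2)\cap N\subset Q_2$. For (b), observe that $\bd_F P_1\subset P_1$ trivially, and $\bd_F P_1\subset P_2\subset Q_2$ by (b) for $P$ together with $P_2\subset Q_2$, hence $\bd_F R_1\subset P_1\cap Q_2=R_2$. The key identity for (c) and (d) is $P_1\setminus(P_1\cap Q_2)=P_1\setminus Q_2\subset P_1\setminus P_2$; this immediately yields (d) from (d) for $P$. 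For (c), I would combine $\Inv N\subset\Int(P_1\setminus P_2)\subset\Int P_1$ (from (c) for $P$) with $\Inv N\cap Q_2=\emptyset$ (from (c) for $Q$ together with $Q_2$ closed), and then use that $Q_2$ is closed to upgrade $\Inv N\subset\Int P_1\cap(X\setminus Q_2)$ to $\Inv N\subset\Int(P_1\setminus Q_2)$.

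For $R:=(P_1\cup Q_2,Q_2)$ the bookkeeping for (a), (c), (d) is again routine: (a) reduces to splitting $F(P_1\cup Q_2)=F(P_1)\cup F(Q_2)$ and applying (a) to each of $P,Q$; (c) and (d) use $(P_1\cup Q_2)\setminus Q_2=P_1\setminus Q_2\subset P_1\setminus P_2$ exactly as above. The main obstacle, and the only genuinely non-trivial step, is verifying (b), namely $\bd_F(P_1\cup Q_2)\subset Q_2$. I plan to argue by contradiction: take $y\in\bd_F(P_1\cup Q_2)\setminus Q_2$; since $Q_2$ is closed, $y\in P_1$, and since $P_2\subset Q_2$ we get $y\in P_1\setminus P_2\subset\Int N$ by (d) for $P$. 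Choose a sequence $y_n\to y$ with $y_n\in F(P_1\cup Q_2)\setminus(P_1\cup Q_2)$, and for $n$ large assume $y_n\notin Q_2$; passing to a subsequence, decompose into two cases.

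In the case $y_n\in F(P_1)$ we have $y_n\in F(P_1)\setminus P_1$, so the limit $y$ lies in $\bd_F P_1\subset P_2\subset Q_2$ by (b) for $P$, contradicting $y\notin Q_2$. In the case $y_n\in F(Q_2)\setminus F(P_1)$, the set $F(Q_2)$ is closed (as $Q_2$ is compact and $F$ is usc), so $y\in F(Q_2)$; but then (a) for $Q$ forces $y\notin N$, contradicting $y\in\Int N$. This closes (b) and completes the proof. I expect this Case~2 limiting argument, which crucially uses both axiom (d) for $P$ and the closedness of the value $F(Q_2)$, to be the only delicate step; everything else is a direct manipulation of the inclusions.
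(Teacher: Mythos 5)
Your proof is correct and follows essentially the same route as the paper's: routine verification of (a), (c), (d) via the identity $(P_1\cup Q_2)\setminus Q_2=P_1\setminus Q_2$, and a sequence-plus-case-split contradiction for axiom (b) of $(P_1\cup Q_2,Q_2)$. The only (harmless) divergence is in your Case~2, where you pass to the limit in the compact set $F(Q_2)$ and apply axiom (a) of $Q$ directly to $y$, whereas the paper instead deduces $y_n\notin N$, hence $y_n\in F(Q_1)\setminus Q_1$, and invokes $\bd _F(Q_1)\subset Q_2$; both arguments are valid.
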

\begin{proof}
Let us start with $(P_1,P_1\cap Q_2)$ and observe that $P_1\setminus (P_1\cap Q_2)\subset P_1\setminus P_2\subset\Int N$. Hence, we have (d).
Clearly $\bd _F(P_1)\subset P_2\subset P_1\cap Q_2$, thus (b) holds.
Verification of properties (a) and (c) is straightforward.

Property (d) of $(P_1\cup Q_2,Q_2)$ holds as we have $(P_1\cup Q_2)\setminus Q_2=P_1\setminus Q_2\subset Q_1\setminus Q_2\subset\Int N$. Verification of (a) and (c) is routine.

It remains to prove (b). For the indirect proof suppose that $y\in\bd _F(P_1\cup Q_2)\setminus Q_2$. There exist sequences $\{x_n\}\subset P_1\cup Q_2$ and $\{y_n\}\subset F(x_n)\setminus (P_1\cup Q_2)$ with $\lim_{n\to\infty}y_n=y$. Without loss of generality we may assume that either $\{x_n\}\subset P_1$ or $\{x_n\}\subset Q_2$. Suppose at first that $\{x_n\}\subset P_1$. Since $y_n\in F(P_1)\setminus P_1$ and $y\in P_1$ we have $y\in \bd _F(P_1)\subset P_2\subset Q_2$, a contradiction. If $\{x_n\}\subset Q_2$ then $y_n\in F(Q_2)\subset F(Q_1)$ and, by property (a) of $Q$, we infer that $y_n\notin Q_1$ as $y_n\notin Q_2$. Since $y\in P_1\subset Q_1$, we have $y\in \bd _F(Q_1)\subset Q_2$, a contradiction.
\qed\end{proof}
\begin{lm}\label{star}
If $A\subset B\subset N$ and $A$ is positively invariant in $N$, i.e. $F(A)\cap N\subset A$, 
then $\bd _F(A)\subset \bd _F(B)$.
\end{lm}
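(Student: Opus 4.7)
The plan is to unpack the definition $\bd_F(A)=\cl A\cap\cl(F(A)\setminus A)$ directly. Take any $y\in\bd_F(A)$. Since $A\subset B$, clearly $y\in\cl A\subset\cl B$, so the first half of membership in $\bd_F(B)$ is free; the entire content of the lemma lies in producing a sequence in $F(B)\setminus B$ converging to $y$.

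By hypothesis there is a sequence $\{y_n\}\subset F(A)\setminus A$ with $y_n\to y$. I will show that this sequence already lies in $F(B)\setminus B$, by ruling out $y_n\in B$. Indeed, $F(A)\subset F(B)$ is immediate from $A\subset B$, so only the condition $y_n\notin B$ is at stake. Suppose for contradiction some $y_n\in B$. Then $y_n\in B\subset N$, so $y_n\in F(A)\cap N$; by positive invariance of $A$ in $N$, this forces $y_n\in A$, contradicting $y_n\in F(A)\setminus A$. Hence $y_n\in F(B)\setminus B$ for every $n$, and passing to the limit yields $y\in\cl(F(B)\setminus B)$.

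Combining the two parts gives $y\in\cl B\cap\cl(F(B)\setminus B)=\bd_F(B)$, as desired. There is no genuine obstacle here: the single substantive point is invoking the hypothesis $F(A)\cap N\subset A$ at precisely the moment when we need $y_n\notin B$, and this is made available by $B\subset N$.
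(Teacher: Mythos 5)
Your proof is correct and rests on exactly the same observation as the paper's: the inclusion $F(A)\setminus A\subset F(B)\setminus B$, obtained by combining positive invariance of $A$ in $N$ with $B\subset N$. The only cosmetic difference is that you wrap this pointwise inclusion in a sequence argument, which is unnecessary (closures are monotone under set inclusion) but harmless.
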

\begin{proof}
If $y\in F(A)\setminus A$, then $y\notin N$ which yields $y\notin B$. Clearly, $y\in F(B)$. Thus, $y\in F(B)\setminus B$. This shows that $F(A)\setminus A\subset F(B)\setminus B$. In consequence, $\bd _F(A)=\cl A\cap \cl (F(A)\setminus A)\subset\cl B\cap \cl (F(B)\setminus B)=\bd _F(B)$.
\qed\end{proof}
\begin{lm}\label{lem_G}
Let $P\subset Q$ be weak index pairs in $N$. Define a pair of sets $G(P,Q)$ by
$$
G_i(P,Q)=P_i\cup(F(Q_i)\cap N)\for i=1,2.
$$
If $P_1=Q_1$ or $P_2=Q_2$ then
\begin{itemize}
\item[(i)] $P_i=Q_i$ implies $G_i(P,Q)=P_i=Q_i$, for $i=1,2$,
\item[(ii)] $P\subset G(P,Q)\subset Q$,
\item[(iii)] $G(P,Q)$ is a weak index pair in $N$.
\item[(iv)] $F(Q_i)\cap N\subset G_i(P,Q)$, for $i=1,2$.
\end{itemize}
\end{lm}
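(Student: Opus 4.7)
The plan is to dispose of parts (i), (ii), (iv) directly from the definitions, then spend the bulk of the argument on (iii). Part (i) is a one-line observation: if $P_i = Q_i$, then axiom (a) of the weak index pair $Q$ forces $F(Q_i) \cap N \subset Q_i = P_i$, so the union defining $G_i(P,Q)$ collapses to $P_i$. Part (ii) combines $P_i \subset G_i(P,Q)$ (by construction) with $G_i(P,Q) \subset Q_i$ (since $P_i \subset Q_i$ and $F(Q_i) \cap N \subset Q_i$ by axiom (a) of $Q$). Part (iv) is immediate.

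For (iii), I verify the four axioms of Definition \ref{wip} in the order (a), (c), (d), (b); compactness of $G_i(P,Q)$ and the containment $G_2(P,Q) \subset G_1(P,Q)$ are routine. Axiom (a) is clean: expand
\[
F(G_i(P,Q)) \cap N \subset (F(P_i) \cap N) \cup (F(F(Q_i) \cap N) \cap N),
\]
where the first summand lies in $P_i$ by axiom (a) of $P$, and axiom (a) of $Q$ traps $F(Q_i) \cap N$ inside $Q_i$, so iterating $F$ once more and intersecting with $N$ lands inside $F(Q_i) \cap N$; both summands are in $G_i(P,Q)$. Axioms (c) and (d) are where the hypothesis $P_1 = Q_1$ or $P_2 = Q_2$ enters. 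When $P_1 = Q_1$, part (i) gives $G_1(P,Q) = P_1 = Q_1$, so $G_1 \setminus G_2$ is squeezed between $Q_1 \setminus Q_2$ and $P_1 \setminus P_2$, and (c), (d) follow from the corresponding axioms of $Q$ and $P$ respectively. The case $P_2 = Q_2$ is symmetric: $G_2(P,Q) = P_2 = Q_2$, and $G_1 \setminus G_2$ sits between $P_1 \setminus P_2$ and $Q_1 \setminus Q_2$.

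The main obstacle is axiom (b) in the case $P_2 = Q_2$; in the other case $G_1 = P_1$ and $\bd_F(G_1) = \bd_F(P_1) \subset P_2 \subset G_2$ follows immediately from axiom (b) of $P$. For $P_2 = Q_2$, I pick $y \in \bd_F(G_1(P,Q))$ with an approaching sequence $y_n \in F(G_1) \setminus G_1$. The key observation is that $F(Q_1) \cap N \subset G_1(P,Q)$ holds by construction, while $F(G_1) \subset F(Q_1)$ follows from $G_1 \subset Q_1$; consequently any point of $F(G_1)$ outside $G_1$ must lie outside $N$, so $y_n \in F(Q_1) \setminus N$, and since $Q_1 \subset N$ we actually have $y_n \in F(Q_1) \setminus Q_1$. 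Combined with $y \in G_1 \subset Q_1$, this places $y$ in $\bd_F(Q_1)$, and axiom (b) of $Q$ yields $y \in Q_2 = G_2(P,Q)$, completing the verification.
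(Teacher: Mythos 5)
Your proof is correct and follows essentially the same route as the paper's: the same verification of axiom (a), the same case split on $P_1=Q_1$ versus $P_2=Q_2$ for (b)--(d), and your sequence argument for (b) in the case $P_2=Q_2$ simply inlines the paper's Lemma~\ref{star} (positive invariance of $G_1(P,Q)$ in $N$ yields $\bd_F(G_1(P,Q))\subset\bd_F(Q_1)$). No gaps.
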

\begin{proof}
Condition (i) follows from property (a) of weak index pairs. The first inclusion in (ii) is obvious. The second one follows from the first one and  property (a) of $Q$.
Condition (iv) is obvious.

We shall prove (iii). We start with property (d). If $P_1=Q_1$ then $G_1(P,Q)\setminus G_2(P,Q)\subset P_1\setminus P_2\subset\Int N$. If $P_2=Q_2$ then we have $G_1(P,Q)\setminus G_2(P,Q)\subset Q_1\cup F(Q_1)\cap N\setminus P_2\subset Q_1\setminus Q_2\subset\Int N$.

In order to verify (a) let $x\in G_i(P,Q)$ and $y\in F(x)\cap N$. If $x\in P_i$ then obviously $y\in G_i(P,Q)$. If $x\in F(Q_i)\cap N$ then $x\in Q_i$. Hence, $y\in F(Q_i)\cap N\subset G_i(P,Q)$.

By Lemma \ref{star} and (i) we have $\bd _FG_1(P,Q)\subset\bd _F(Q_1)$. Thus, if $P_1=Q_1$, then $\bd _FG_1(P,Q)\subset\bd _F(Q_1)=\bd _F(P_1)\subset P_2\subset G_2(P,Q)$. If $P_2=Q_2$, then $\bd _FG_1(P,Q)\subset\bd _F(Q_1)\subset Q_2=P_2\subset G_2(P,Q)$.

For (c), let us note that $\Inv N\subset\Int (P_1\setminus P_2)\cap\Int (Q_1\setminus Q_2)$, hence it is enough to show that $(P_1\setminus P_2)\cap(Q_1\setminus Q_2)\subset G_1(P,Q)\setminus G_2(P,Q)$. Indeed, if $y\in (P_1\setminus P_2)\cap(Q_1\setminus Q_2)$ then $y\in G_1(P,Q)$ and the supposition that $y\in G_2(P,Q)$ implies $y\in F(Q_2)\cap N\subset Q_2$, a contradiction.
\qed\end{proof}
\begin{lm}\label{seq}
Let $P\subset Q$ be weak index pairs in $N$ such that $P_1=Q_1$ or $P_2=Q_2$. Then, there exists a sequence of weak index pairs such that
$$
P=Q^n\subset Q^{n-1}\subset\cdots\subset Q^1\subset Q^0=Q
$$
with the following properties
\begin{itemize}
\item[(i)] $P_i=Q_i$ implies $Q^k _i=P_i=Q_i$ for all $k=1,2,\dots, n-1$, $i=1,2$,
\item[(ii)] $F(Q^k _i)\cap N\subset Q^{k+1} _i$ for $k=0,1,\dots, n-1$, $i=1,2$.
\end{itemize}
\end{lm}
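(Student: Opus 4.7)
The plan is to construct the chain by iterating the $G$-construction of Lemma~\ref{lem_G}: set $Q^0 := Q$ and $Q^{k+1} := G(P, Q^k)$. An induction on $k$ using Lemma~\ref{lem_G}(ii)--(iii) will confirm that each $Q^k$ is a weak index pair in $N$ with $P \subset Q^{k+1} \subset Q^k$, so the hypothesis of the next application of Lemma~\ref{lem_G} persists. Property (i) of the present lemma follows immediately from Lemma~\ref{lem_G}(i) by the same induction, and property (ii) is exactly the content of Lemma~\ref{lem_G}(iv). What remains is to show that the iteration reaches $P$ after finitely many steps.

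Let $i^* \in \{1, 2\}$ denote the coordinate with $P_{i^*} \subsetneq Q_{i^*}$ (the other coordinate equals $P_i$ throughout by Lemma~\ref{lem_G}(i)). Unfolding $Q^k_{i^*} = P_{i^*} \cup (F(Q^{k-1}_{i^*}) \cap N)$ by induction, using $F(F_{N, k-1}(Q_{i^*})) \cap N \subset F_{N, k}(Q_{i^*})$ and absorbing $F(P_{i^*}) \cap N \subset P_{i^*}$ by axiom (a) of $P$, yields the bound
\[
Q^k_{i^*} \subset P_{i^*} \cup F_{N, k}(Q_{i^*}).
\]
It therefore suffices to show that $F_{N, k}(Q_{i^*}) \subset P_{i^*}$ for all sufficiently large $k$; any such $k$ then serves as the desired $n$.

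Suppose, for contradiction, that for infinitely many $k$ there is $\sigma_k \in \Sol(N, F, [0, k])$ with $\sigma_k(0) \in Q_{i^*}$ and $\sigma_k(k) \notin P_{i^*}$. By axiom (a) of $P$ the set $P_{i^*}$ is forward invariant in $N$, so any entry of $\sigma_k$ into $P_{i^*}$ would be permanent; consequently $\sigma_k(j) \notin P_{i^*}$ for every $j$. Analogously (a) of $Q$ gives $\sigma_k(j) \in Q_{i^*}$. Hence each $\sigma_k$ takes values in the compact set $K := Q_{i^*} \setminus \Int P_{i^*}$. A diagonal extraction using compactness of $K$ and the closed graph of the usc map $F$ produces an infinite forward solution $\tau \colon \Z^+ \to K$. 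Its nonempty compact $\omega$-limit set $\omega(\tau)$ lies in $K$, and a second diagonal argument through any point of $\omega(\tau)$ builds a $\Z$-indexed solution inside $\omega(\tau) \subset N$, so $\omega(\tau) \subset \Inv N$. But $\Inv N \cap K = \emptyset$: axiom (c) of $P$ gives $\Inv N \subset \Int P_1$ when $i^* = 1$, while axiom (c) of $Q$ gives $\Inv N \cap Q_2 = \emptyset$ when $i^* = 2$. Either way this contradicts $\emptyset \neq \omega(\tau) \subset K$.

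The main obstacle is this final compactness-plus-invariance step: extracting the infinite forward trajectory $\tau$ from the finite trajectories $\sigma_k$, and then extracting a full $\Z$-indexed solution through an $\omega$-limit point of $\tau$. Both extractions rest on the closed graph of the usc map $F$ combined with simultaneous-subsequence (diagonal) selections, and require careful bookkeeping in the multivalued setting; the rest of the argument is a fairly mechanical application of Lemma~\ref{lem_G} and the forward-invariance axioms of the weak index pairs.
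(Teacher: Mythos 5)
Your proof is correct and follows essentially the same route as the paper: iterate $G(P,\cdot)$, and if the chain never stabilizes produce arbitrarily long solutions lying in $Q_{i^*}\setminus P_{i^*}$, whence a nonempty invariant set contradicting axiom (c) of $P$ (for $i^*=1$) or of $Q$ (for $i^*=2$). The diagonal-extraction step you flag as the main obstacle is precisely Lemma~\ref{lem_8} applied to the compact set $Q_{i^*}\setminus\Int P_{i^*}$, so you may simply cite it instead of redoing the $\omega$-limit construction.
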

\begin{proof}
Let us define $Q^k$ by the recurrence formula $Q^0=Q$, $Q^{k+1}=G(P,Q^k)$ for $k\in\Z ^+$. By Lemma \ref{lem_G} and induction on $k$, $\{Q^k\}$ is a decreasing sequence of weak index pairs satisfying (i) and (ii) for all $k\in\Z^+$. It remains to show that for some $n$ we have $Q^n=P$. To the contrary suppose that the inclusion $P\subset Q^k$ is strict for all $k$. Fix $k\in\Z^+$ and $i\in\{1,2\}$ such that $Q_i ^k\setminus P_i\neq \emptyset$, and observe that
\begin{equation}\label{sol}
\mbox{there exists a solution }\sigma _k:[0,k]\to Q_i\setminus P_i.
\end{equation}
Indeed, choose $\sigma_k(k)\in Q_i ^k\setminus P_i\subset Q_i\setminus P_i$. Then $\sigma_k(k)\in F(Q_i ^{k-1}\cap N)$, hence there exists $\sigma_k(k-1)\in Q_i ^{k-1}$ with $\sigma_k(k)\in F(\sigma_k(k-1))$. By property (a) of $P$ we may assume that $\sigma_k(k-1)\notin P_i$. Hence, $\sigma_k(k-1)\in Q_i ^{k-1}\setminus P_i\subset Q_i\setminus P_i$. By the reverse recurrence we are done.

Since $k$ was arbitrarily chosen, by (\ref{sol}) and Lemma \ref{lem_8}, we have
\begin{equation}\label{eq5}
\Inv (Q_i\setminus\Int P_i)\neq 0.
\end{equation}
For $i=1$ we get $\emptyset\neq\Inv (Q_1\setminus\Int P_1)\subset Q_1\setminus\Int P_1$. However, by property (c) of $P$, $\Inv (Q_i\setminus\Int P_i)\subset\Inv N\subset\Int (P_1\setminus P_2)\subset\Int P_1$, a contradiction.

If $i=2$ then $\Inv Q_2\neq\emptyset$, because by (\ref{eq5}), $\emptyset\neq\Inv (Q_2\setminus\Int P_2)\subset\Inv Q_2$. On the other hand, by property (c) of $Q$, $\Inv Q_2\subset\Inv N\subset\Int (Q_1\setminus Q_2)\subset Q_1\setminus Q_2$
which means that $\Inv Q_2=\emptyset$, a contradiction.
\qed\end{proof}
\section{Definition of the Conley index}
Since now on we will require that the generator $F:X\to\mathcal{P}(X)$ of the dmds, restricted to appropriate pairs of sets, induces a homomorphism in cohomology. Therefore, we assume that $F$ is {\em determined by a given morphism}. For the details concerning this concept we refer to \cite{G83}, \cite{G76} and \cite{M90}. Let us remind that, in particular, any single-valued continuous map as well as any composition of acyclic maps (i.e. usc maps with compact acyclic values) belongs to this class.

For a weak index pair $P$ in an isolating neighbourhood $N$ we let
$$
T(P):=T_N(P):=(P_1\cup (X\setminus \Int N), P_2\cup (X\setminus \Int N)).
$$
\begin{lm}\label{l1}
If $P$ is a weak index pair for $F$ in $N$ then
\begin{itemize}
\item[(i)] $F(P)\subset T(P)$,
\item[(ii)] the inclusion $i_{P,T(P)}:P\to T(P)$ induces an isomorphism in the Alexander-Spanier cohomology.
\end{itemize}
\end{lm}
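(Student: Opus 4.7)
The plan is to handle the two parts separately. Part (i) is an immediate consequence of axiom (a) of a weak index pair, while part (ii) is an application of the strong excision property of Alexander--Spanier cohomology, for which axiom (d) supplies precisely the set-theoretic hypothesis needed.

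For (i), I would fix $i \in \{1,2\}$ and take any $y \in F(P_i)$. If $y \in \Int N$, then $y \in F(P_i) \cap N$, which by axiom (a) is contained in $P_i$, and hence in $T_N(P)_i$. If instead $y \notin \Int N$, then $y \in X \setminus \Int N \subset T_N(P)_i$ directly. Either way $y \in T_N(P)_i$, which is all that is needed.

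For (ii), write $U := X \setminus \Int N$, so that $T(P) = (P_1 \cup U,\, P_2 \cup U)$. Axiom (d) says $P_1 \setminus P_2 \subset \Int N$, equivalently $P_1 \cap U \subset P_2$. This gives
\[
P_1 \cap (P_2 \cup U) \;=\; P_2 \cup (P_1 \cap U) \;=\; P_2,
\]
and also $(P_1 \cup U) \setminus (P_2 \cup U) = P_1 \setminus (P_2 \cup U) = P_1 \setminus P_2$. Consequently, the inclusion $i_{P, T(P)} \colon (P_1, P_2) \hookrightarrow (P_1 \cup U,\, P_2 \cup U)$ is a closed inclusion of closed pairs that restricts to the identity on $P_1 \setminus P_2 = (P_1 \cup U) \setminus (P_2 \cup U)$. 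The strong excision property of Alexander--Spanier cohomology (cf.~Spanier, \emph{Algebraic Topology}, Ch.~6) then yields that $i_{P,T(P)}$ induces an isomorphism in cohomology.

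The only mild obstacle is to invoke excision in the right form: the spaces $P_1 \cup U$ and $P_2 \cup U$ need not be compact, so the most elementary excision statement does not apply. However, the closed-pair version of strong excision still applies, since $P_1$ is closed in $P_1 \cup U$ (being compact) and the relative-homeomorphism hypothesis is guaranteed by the containment $P_1 \cap U \subset P_2$ furnished by axiom (d). Everything else is routine set-theoretic bookkeeping, and the argument can be read as an explanation of why axiom (d) is the correct additional condition to impose in the weak-index-pair setting: it is exactly what repairs the excision step that would otherwise fail for pairs satisfying only (a)--(c).
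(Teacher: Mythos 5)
Your proposal is correct and follows essentially the same route as the paper: part (i) is read off from axiom (a), and part (ii) reduces to the set-theoretic identity $T_1(P)\setminus T_2(P)=P_1\setminus P_2$ (a consequence of axiom (d)) followed by an appeal to strong excision for Alexander--Spanier cohomology. Your additional verification that $P_1\cap T_2(P)=P_2$ is a harmless (indeed welcome) extra detail that the paper leaves implicit.
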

\begin{proof}
Condition (i) follows from property (a) of $P$. Since we have $T_1(P)\setminus T_2(P)=(P_1\setminus P_2)\cap N=P_1\setminus P_2$, and $P_1\setminus P_2\subset\Int N$, condition (ii) is a consequence of the strong excision property (cf. \cite{Sp1966}).
\qed\end{proof}
Let $F_{P,T(P)}(x):=F(x)$ for $x\in P$. By Lemma \ref{l1} (i) such a restriction of $F$ is a map of pairs $F_{P,T(P)}:P\to \mathcal{P}(T(P))$. 
Put $i_P:=i_{P,T(P)}$.
By Lemma \ref{l1} (ii) $H^*(i_P)$ is an isomorphism. Thus, we can pose the following definition.
\begin{df}
{\rm
The endomorphism $H^*(F_{P,T(P)})\circ H^*(i_P)^{-1}$ of $H^*(P)$ will be called the {\em index map} associated with the index pair $P$ and denoted by $I_P$.
}
\end{df}
Applying the Leray functor $L$ to $(H^*(P),I_P)$ we obtain a graded module over $\Z$ and its endomorphism, which is called the Leray reduction of the Alexander-Spanier cohomology of $P$. For the details we refer to \cite{M90}.
\begin{df}
\label{def:con}
{\rm
The module $L(H^*(P),I_P)$ will be called the {\em cohomological Conley index of $S$} and denoted by $C(S,F)$, or simply  by $C(S)$ if $F$ is clear from the context.}
\end{df}
To have the Conley index well defined, the following theorem, which is an analogue of \cite[Theorem 3.2]{KM95}, is necessary.
\begin{thm}\label{th_ind}
Let $S$ be an isolated invariant set. Then $C(S,F)$ is independent of the choice of an isolating neighbourhood $N$ for $S$ and of a weak index pair $P$ in $N$.
\end{thm}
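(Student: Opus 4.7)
The plan is to follow the template of \cite[Theorem 3.2]{KM95}, split into two tasks: (A) fix $N$ and show the index is independent of the choice of weak index pair in $N$; (B) show it is independent of the choice of isolating neighbourhood $N$ of $S$. Throughout, I will rely on the algebraic machinery of the Leray functor from \cite{M90}, which says in particular that if $j:(E,\alpha)\to(E',\alpha')$ is a morphism of endomorphisms admitting a ``shift'' $k:E'\to E$ with $kj=\alpha$ and $jk=\alpha'$, then $L(E,\alpha)\cong L(E',\alpha')$.

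For task (A), given weak index pairs $P,Q$ in $N$, Lemma \ref{inter1} shows $P\cap Q$ is also a weak index pair in $N$, so it suffices to treat the case $P\subset Q$. By Lemma \ref{inter2} the pairs $P'=(P_1,P_1\cap Q_2)$ and $Q'=(P_1\cup Q_2,Q_2)$ are weak index pairs in $N$, and one verifies the inclusions $P\subset P'\subset Q'\subset Q$. The inclusion $P\subset P'$ has matching first coordinates, and $Q'\subset Q$ has matching second coordinates, so Lemma \ref{seq} provides in each case a finite decreasing sequence $Q^0\supset\dots\supset Q^n$ of weak index pairs in $N$ with the crucial property $F(Q^k_i)\cap N\subset Q^{k+1}_i$. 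For $P'\subset Q'$ we have $P'_1\setminus P'_2=Q'_1\setminus Q'_2=P_1\setminus Q_2$, so by Lemma \ref{l1}(ii) applied to both sides, the inclusion induces an Alexander--Spanier cohomology isomorphism $H^*(T_N(Q'))\cong H^*(T_N(P'))$ via strong excision. Thus every link in the chain gives a comparable index; taking the Leray reduction produces the desired isomorphism.

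For task (B), given isolating neighbourhoods $N$ and $M$ for $S$, the intersection $N\cap M$ is itself an isolating neighbourhood since $\Inv(N\cap M)\subset\Inv N\cap\Inv M=S\subset\Int N\cap\Int M=\Int(N\cap M)$. By Theorem \ref{thm:existence} pick a weak index pair $P$ in $N$ with $P_1\setminus P_2\subset\Int(N\cap M)$, and similarly a weak index pair $R$ in $M$. Apply Lemma \ref{int3} with $N\cap M$ in place of $M$ to get weak index pairs $P\cap(N\cap M)$ and $R\cap(N\cap M)$ in $N\cap M$. One checks directly that passing from $P$ in $N$ to $P\cap(N\cap M)$ in $N\cap M$ changes neither the exit set $P_1\setminus P_2$ nor the Alexander--Spanier cohomology of $T(P)$, because $P_1\setminus(P_1\cap(N\cap M))\subset P_2$, so excision and the identification $T_N(P)\cong T_{N\cap M}(P\cap(N\cap M))$ at the level of cohomology give compatible index maps. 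Task (A) applied inside $N\cap M$ then identifies the two cohomological Conley indices.

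The main obstacle is verifying that each inclusion $j:Q^{k+1}\hookrightarrow Q^k$ coming from Lemma \ref{seq} is genuinely compatible with the index maps after Leray reduction, even though $H^*(j)$ need not intertwine $I_{Q^k}$ and $I_{Q^{k+1}}$ on the nose. The remedy is the property $F(Q^k_i)\cap N\subset Q^{k+1}_i$ from Lemma \ref{seq}(ii): combined with $F(Q^k_i)\subset Q^k_i\cup(X\setminus N)$ it yields a morphism of pairs $F_{Q^k,T(Q^{k+1})}:Q^k\to T_N(Q^{k+1})$, which together with the inclusions $i_{Q^k},i_{Q^{k+1}}$ provides precisely the ``shift'' pair of maps needed to invoke the Leray-functor lemma of \cite{M90} and conclude $L(H^*(Q^k),I_{Q^k})\cong L(H^*(Q^{k+1}),I_{Q^{k+1}})$. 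The same diagrammatic device, applied with the inclusions supplied by Lemmas \ref{inter2} and \ref{int3}, handles the remaining comparisons and assembles the full invariance statement.
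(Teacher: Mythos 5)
Your proposal is correct and follows essentially the same route as the paper's proof: reduce via Lemma \ref{inter1} to nested pairs, split the nested case into the chain $P\subset(P_1,P_1\cap Q_2)\subset(P_1\cup Q_2,Q_2)\subset Q$ from Lemma \ref{inter2}, handle the matching-coordinate links with Lemma \ref{seq} and the ``linked endomorphisms'' (shift-map) criterion of \cite{M90} using the map of pairs $F_{Q^k,T(Q^{k+1})}$, handle the middle link by strong excision, and treat distinct neighbourhoods via $N\cap M$, Theorem \ref{thm:existence} and Lemma \ref{int3}. The technical point you flag as the main obstacle is exactly the paper's Step 1, resolved there by the same commutative diagram you describe.
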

\begin{proof} Let $M$, $N$ be two isolating neighbourhoods of $S$ and $P$, $Q$ weak index pairs in $N$ and $M$, respectively. We need to prove that $L(H^*(P),I_P)=L(H^*(Q),I_Q)$. The proof runs in five steps, similarly as the proof of \cite[Theorem 3.2]{KM95}.

{\em Step 1.} Let us consider the following special case
\begin{itemize}
\item[(i)] $M=N$,
\item[(ii)] $P\subset Q$,
\item[(iii)] $P_1=Q_1$ or $P_2=Q_2$,
\item[(iv)] $F(Q)\subset T_N(P)$.
\end{itemize}
By (iv) we may treat $F$ as a map of pairs $F_{Q,T(P)}:Q\to\mathcal{P}(T_N(P))$. Let $I_{Q,P}:=H^*(F_{Q,T(P)})\circ H^*(i_P)^{-1}$ be the induced homomorphism. We have the following commutative diagram
\begin{center}
\begin{tabular}{ccc}
$H^*(P)$&$\xleftarrow{\hspace{5mm}I_P\hspace{5mm}}$&
$H^*(P)$\\[0.4cm]
\small{$H^*(j)$}$\left\uparrow\rule{0cm}{0.8cm}\right.$&$\swarrow$\small{$I_{Q,P}$}&$\left\uparrow\rule{0cm}{0.8cm}\right. $\small{$H^*(j)$}\\[8mm]
$H^*(Q)$&$\xleftarrow{\hspace{5mm}I_Q\hspace{5mm}}$&$
H^*(Q)$
\end{tabular}
\end{center}
in which $j:P\to Q$ is the inclusion. This shows that $(H^*(P),I_P)$ and $(H^*(Q),I_Q)$ are linked in the sense of \cite{M90} hence $LH^*(j):L(H^*(Q),I_Q)\to L(H^*(P),I_P)$ is an isomorphism.

{\em Step 2.} We lift the assumption (iv). Let $Q^k$, $k=1,2,\dots, n$ be the sequence of weak index pairs, existence of which follows from Lemma \ref{seq}. Then each pair $(Q^{k+1}$, $Q^k)$ of weak index pairs satisfies assumptions (ii) -- (iv), hence by {\em Step 1} their corresponding Leray reductions are isomorphic. Since $Q_0=Q$ and $Q_n=P$, the conclusion follows.

{\em Step 3}. We lift the assumption (iii). Let $R_1:=(P_1\cup Q_2)$, $R_2:=P_1\cap Q_2$. By Lemma \ref{inter2} $(P_1,R_2)$ and  $(R_1,Q_2)$ are weak index pairs. Consider the following commutative diagram of inclusions
\begin{center}
\begin{tabular}{ccc}
$(P_1,R_2)$&$\xrightarrow{\hspace{5mm}j_2\hspace{5mm}}$&
$(R_1,Q_2)$\\[0.4cm]
\small{$j_1$}$\left\uparrow\rule{0cm}{0.6cm}\right.$&&$\left\uparrow\rule{0cm}{0.6cm}\right. $\small{$j_3$}\\[8mm]
$(P_1,P_2)$&$\xrightarrow{\hspace{5mm}j_4\hspace{5mm}}$&$
(Q_1,Q_2)$
\end{tabular}
\end{center}
Obviously pairs $(P_1,P_2)$, $(P_1,R_2)$ and $(R_1,Q_2)$, $(Q_1,Q_2)$ satisfy assumptions (ii) and (iii), therefore by {\em Step 2} inclusions $j_1$ and $j_3$ induce isomorphisms. Since $P_1\setminus R_2=P_1\setminus Q_2=R_1\setminus Q_2$, by the strong excision property inclusion $j_2$ induces an isomorphism.

{\em Step 4}. We assume only (i). By Lemma \ref{inter1} $P\cap Q$ is a weak index pair, hence the conclusion follows from {\em Step 3} applied to pairs $P\cap Q\subset P$ and $P\cap Q\subset Q$.

{\em Step 5}. If $M\neq N$ one may assume that $M\subset N$ since otherwise $M\cap N$ can be considered as an isolating neighbourhood of $S$. By {\em Step 4} it suffices to show the existence of weak index pairs $P$ and $Q$ in $N$ and $M$ respectively, and such that $L(H^*(P),I_P)$ and $L(H^*(Q),I_Q)$ are isomorphic.

By Proposition \ref{wip} there exists a weak index pair $P$ in $N$ such that $P_1\setminus P_2\subset \Int M$. By Lemma \ref{int3}, $Q:=P\cap M$ is a weak index pair in $M$. Moreover $Q_1\setminus Q_2=(P_1\setminus P_2)\cap M=P_1\setminus P_2$ hence the inclusion $Q\subset P$ induces an isomorphism in cohomology, by the strong excision property.
\qed\end{proof}

We finish this section with the following theorem showing that the definition of the Conley index for dmds proposed
in this paper generalizes earlier definitions.
\begin{thm}
\label{thm:gen}
Assume $S$ is a strongly isolated invariant set of a dmds $F$. Then,
$S$ is an isolated invariant set in the sense of Definition~\ref{def:iis}.
Moreover, the Conley index of $S$ in the sense of Definition~\ref{def:con}
coincides with the Conley index of $S$ in the sense of \cite{KM95,S06}.
\end{thm}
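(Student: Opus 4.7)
The plan is to show that, for any strongly isolating neighbourhood $N$ of $S$, one can produce a single pair $P=(P_1,P_2)$ in $N$ that simultaneously satisfies Definition \ref{ip} (index pair) and Definition \ref{wip} (weak index pair), and whose associated cohomological data feed both constructions identically. The first assertion of the theorem is free: Proposition \ref{prop_strong_in} shows that a strongly isolating neighbourhood is an isolating neighbourhood, so $S=\Inv N$ is an isolated invariant set in the sense of Definition \ref{def:iis}.

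For the second assertion, I would fix a strongly isolating $N$ and invoke the classical construction of \cite[Theorem 2.6]{KM95}, the strong-isolation prototype of our Theorem \ref{thm:existence}, to produce an index pair $P$ in $N$ in the sense of Definition \ref{ip}. That construction affords the freedom to force $P_1\setminus P_2$ to lie inside any preassigned open neighbourhood of $\Inv N$. Since strong isolation of $N$ implies $\Inv N\subset\Int N$, one may choose this neighbourhood inside $\Int N$, yielding $P_1\setminus P_2\subset\Int N$. Proposition \ref{prop:ip-wip} then certifies that the very same $P$ is also a weak index pair.

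With such a $P$ in hand, the coincidence of the two indices is a matter of unwinding the definitions. Both the cohomological Conley index of \cite{KM95,S06} and the one of Definition \ref{def:con} are computed as the Leray reduction $L(H^*(P),I_P)$, with $I_P=H^*(F_{P,T(P)})\circ H^*(i_P)^{-1}$ and $T(P)=(P_1\cup(X\setminus\Int N),\,P_2\cup(X\setminus\Int N))$. For the pair constructed above these data agree literally, and well-posedness is provided on one side by \cite[Theorem 3.2]{KM95} and on the other by Theorem \ref{th_ind}. Hence the two indices coincide.

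The main obstacle is exactly the middle step: producing a pair that is of both types at once. The classical index pairs automatically satisfy (b') but not, in general, the extra condition (d) of Definition \ref{wip}; conversely, the weak index pairs produced by Theorem \ref{thm:existence} satisfy (d) but not (b'). The bridge is the freedom, available under strong isolation, to place $P_1\setminus P_2$ inside $\Int N$, after which Proposition \ref{prop:ip-wip} supplies (b) for free and unifies the two settings.
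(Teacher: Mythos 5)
Your proposal is correct and follows exactly the route of the paper, which simply cites Propositions~\ref{prop:siis-iis} and~\ref{prop_strong_in}, Theorem~2.6 of \cite{KM95}, Proposition~\ref{prop:ip-wip} and Theorem~\ref{th_ind} in sequence; you have merely spelled out the details, in particular the key point that the classical construction lets one force $P_1\setminus P_2\subset\Int N$ so that Proposition~\ref{prop:ip-wip} applies.
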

\begin{proof}
The theorem is an immediate consequence of Propositions~\ref{prop:siis-iis},~\ref{prop_strong_in},
Theorem 2.6 in \cite{KM95}, Proposition~\ref{prop:ip-wip}
and Theorem~\ref{th_ind}.
\qed\end{proof}

\section{Construction of the dual discrete multivalued dynamical system admitting index pairs}
From now on we assume that $X$ is a locally compact normal space and $P$ is a weak index pair in $N$.

We select compact, disjoint sets $C,D\subset P_1\cup (X\setminus \Int N)$ such that $$\cl (P_1\setminus P_2)\subset C$$ and $$X\setminus \Int N\subset D.$$ By Urysohn's lemma we can choose a continuous function $\alpha :X\to [0,1]=:I$ such that $\alpha |_C=0$ and $\alpha |_D=1$.
Let
$$
\bar{X}:=\bar{X}(P):=(P_1\setminus P_2)\times \{0\}\cup
(P_2\cup (X\setminus \Int N))\times I\cup X\times \{1\}
$$
with the Tichonov topology.
Consider $$\mu:\bar{X}\ni (x,t)\mapsto t+(1-t)\alpha (x)\in I.$$
The following properties of $\mu$ are straightforward.
\begin{prop}\label{prop_mu}
For any $(x,t)\in\bar{X}$ we have
\begin{enumerate}
\item $\mu (x,0)=\alpha (x)$,
\item $x\in C\Rightarrow\mu(x,t)=t$,
\item $x\in D\vee t=1\Rightarrow\mu(x,t)=1$,
\item $\mu$ is nondecreasing with respect to the second variable.
\end{enumerate}
\end{prop}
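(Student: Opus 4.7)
The plan is to read off each of the four assertions directly from the explicit formula $\mu(x,t)=t+(1-t)\alpha(x)$, using the two defining properties of the Urysohn function, $\alpha|_C\equiv 0$ and $\alpha|_D\equiv 1$, together with the fact that $\alpha$ takes values in $[0,1]$. No further use of the structure of $\bar X$ or of the weak index pair $P$ is needed; the statement is purely a computation with a function that is affine in $t$.

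For (1), I would substitute $t=0$ into the formula to obtain $\mu(x,0)=0+(1-0)\alpha(x)=\alpha(x)$. For (2), the hypothesis $x\in C$ gives $\alpha(x)=0$, so $\mu(x,t)=t+(1-t)\cdot 0=t$ for every $t\in I$. For (3), I would split on the two disjuncts of the antecedent: if $x\in D$, then $\alpha(x)=1$, whence $\mu(x,t)=t+(1-t)\cdot 1=1$; if instead $t=1$, then the factor $(1-t)$ vanishes, giving $\mu(x,1)=1+0\cdot\alpha(x)=1$ regardless of $x$.

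For (4), it is cleanest to rewrite the formula in the form $\mu(x,t)=\alpha(x)+(1-\alpha(x))\,t$, which exhibits $t\mapsto\mu(x,t)$ as an affine function with slope $1-\alpha(x)$. Since $\alpha(x)\in[0,1]$, this slope is nonnegative, so for any pair of points $(x,t_1),(x,t_2)\in\bar X$ with $t_1\leq t_2$ we obtain $\mu(x,t_2)-\mu(x,t_1)=(1-\alpha(x))(t_2-t_1)\geq 0$, as required.

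No real obstacle is expected: the entire argument is a case-by-case substitution and one affine monotonicity check. The only minor point to be careful about is interpreting ``nondecreasing in the second variable'' correctly on the non-product domain $\bar X$, but this is harmless because the assertion only concerns pairs $(x,t_1),(x,t_2)\in\bar X$ sharing the same first coordinate, which is exactly what the computation above handles.
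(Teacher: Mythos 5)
Your verification is correct and is exactly the routine substitution the paper has in mind when it declares these properties ``straightforward'' and omits the proof. Nothing is missing; the rewriting of $\mu(x,t)$ as $\alpha(x)+(1-\alpha(x))t$ for part (4) is the right way to see the monotonicity.
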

\begin{prop}\label{Fbar}
We have a well defined usc, acyclic valued map
$$
\bar{F}:\bar{X}\ni(x,t)\mapsto F(x)\times\{\mu(x,t)\}\subset \bar{X}.
$$
\end{prop}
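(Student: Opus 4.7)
The plan is to verify in turn the three assertions packed into the statement: (i) $\bar F(x,t)\subset\bar X$ for every $(x,t)\in\bar X$; (ii) $\bar F$ is upper semicontinuous; (iii) each value $\bar F(x,t)$ is acyclic.

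For (i) I would perform a case analysis on the position of $x$, using Proposition~\ref{prop_mu} to locate $\mu(x,t)$ and property (a) of the weak index pair (Definition~\ref{wip}) to control $F(x)$. If $x\in X\setminus\Int N$ or $t=1$, then Proposition~\ref{prop_mu}(3) applies (using $X\setminus\Int N\subset D$ in the first case) to give $\mu(x,t)=1$, so $\bar F(x,t)\subset X\times\{1\}\subset\bar X$. If instead $x\in P_2\cap\Int N$, property (a) yields $F(x)\cap N\subset P_2$, hence $F(x)\subset P_2\cup(X\setminus\Int N)$, and therefore $\bar F(x,t)\subset(P_2\cup(X\setminus\Int N))\times I\subset\bar X$. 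Finally, if $x\in P_1\setminus P_2$, the fact that $P_1\setminus P_2$ is disjoint from both $P_2$ and $X\setminus\Int N$ (the latter by property (d) of a weak index pair) forces $t\in\{0,1\}$ for $(x,t)$ to lie in $\bar X$; the case $t=1$ is already handled, and for $t=0$ the inclusion $\cl(P_1\setminus P_2)\subset C$ yields $\alpha(x)=0$ and thus $\mu(x,0)=0$ by Proposition~\ref{prop_mu}(2), while property (a) gives $F(x)\cap N\subset P_1$, so each $y\in F(x)$ lies in $(P_1\setminus P_2)\cup P_2\cup(X\setminus\Int N)$ and hence $(y,0)\in\bar X$. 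These three cases exhaust $\bar X$.

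For (ii) the plan is to exploit the product representation $\bar F(x,t)=F(x)\times\{\mu(x,t)\}$. Its first factor is the composition of the projection $\bar X\to X$ with the usc compact-valued map $F$, and its second factor is the single-valued continuous map $\mu$; a quick closed-graph argument using the local compactness of $X$ and the compactness of the values of $F$ then shows that $\bar F$ has a closed graph with compact values and so is usc. For (iii), the projection onto the first coordinate identifies $\bar F(x,t)=F(x)\times\{\mu(x,t)\}$ homeomorphically with $F(x)$, and therefore acyclicity of $F(x)$ (inherited from the hypothesis that $F$ is acyclic-valued, or more generally determined by a morphism of that form) transfers to $\bar F(x,t)$.

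The main obstacle is the case analysis in (i). The delicate point is the observation that a point $(x,t)\in\bar X$ with $x\in P_1\setminus P_2$ is forced to satisfy $t\in\{0,1\}$; this is precisely what licenses the use of Proposition~\ref{prop_mu}(2) to conclude $\mu(x,0)=0$ and thereby keep $\bar F(x,0)$ inside the first two pieces of $\bar X$. The remaining two assertions are essentially formal consequences of standard properties of usc compact-valued maps and of the trivial fact that $\{\mu(x,t)\}$ is a one-point set.
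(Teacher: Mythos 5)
Your proposal is correct and follows essentially the same route as the paper: the same three-case analysis of $(x,t)\in\bar X$ (using Proposition~\ref{prop_mu}(3) when $x\in X\setminus\Int N$ or $t=1$, property (a) of the weak index pair when $x\in P_2$, and $\mu(x,0)=0$ via $\cl(P_1\setminus P_2)\subset C$ when $x\in P_1\setminus P_2$, $t=0$), followed by the standard observations that upper semicontinuity follows from that of $F$ together with the continuity of $\mu$, and that each value is homeomorphic to $F(x)$, hence acyclic. Your explicit remark that $x\in P_1\setminus P_2$ forces $t\in\{0,1\}$ is a detail the paper leaves implicit, and your inline derivation of $F(P_i)\subset P_i\cup(X\setminus\Int N)$ from property (a) is just Lemma~\ref{l1}(i), which the paper cites instead.
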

\begin{proof}
We shall verify that $F(x)\times\{\mu(x,t)\}\subset \bar{X}$ for any $(x,t)\in \bar{X}$. According to Proposition \ref{prop_mu} (3) the conclusion is straightforward whenever $x\in X\setminus \Int N$ or $t=1$.
By Lemma \ref{l1} (i) we have $\bar{F}(x,t)\subset (P_2\cup(X\setminus \Int N))\times I\subset \bar{X}$ for $x\in P_2$ and any $t\in I$. It remains to consider $x\in P_1\setminus P_2$ and $t=0$. Then, by Lemma \ref{l1} (i) and Proposition \ref{prop_mu} (2),
$\bar{F}(x,0)\subset 
(P_1\cup(X\setminus \Int N))\times\{0\}\subset\bar{X}$.

Upper semicontinuity of $\bar{F}$ follows from the upper semicontinuity of $F$ and the continuity of $\mu$.

Since for any $(x,t)\in\bar{X}$ the set $\bar{F}(x,t)$ is homeomorphic to $F(x)$, acyclicity of values is obvious.
\qed\end{proof}
Consider the homeomorphism (onto its image)
$$
\iota:P_1\cup(X\setminus \Int N)\ni x\mapsto (x,0)\in\bar{X}.
$$
From the lines of the proof of Proposition \ref{Fbar} the following corollary follows.
\begin{cor}\label{c1}
We have
\begin{itemize}
\item[(i)] $\bar{F}(\iota(\cl (P_1\setminus P_2)))\subset \iota(P_1\cup(X\setminus \Int N))$,
\item[(ii)] $\bar{F}((X\setminus \Int N)\times I)\subset X\times \{1\}$.
\end{itemize}
\end{cor}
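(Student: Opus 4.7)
The plan is to unfold the definition $\bar{F}(x,t) = F(x) \times \{\mu(x,t)\}$ in each case and read off the conclusion from the behaviour of $\mu$ forced by the Urysohn-type choices of $C$ and $D$.

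For part (ii), pick any $(x,t) \in (X \setminus \Int N) \times I$. By construction $X \setminus \Int N \subset D$, so $x \in D$, and Proposition~\ref{prop_mu}(3) gives $\mu(x,t) = 1$. Hence $\bar{F}(x,t) = F(x) \times \{1\} \subset X \times \{1\}$, which is exactly the target.

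For part (i), observe that $\iota(\cl(P_1 \setminus P_2)) = \cl(P_1 \setminus P_2) \times \{0\}$. Fix $x \in \cl(P_1 \setminus P_2)$; by the choice of $C$ we have $x \in C$, so Proposition~\ref{prop_mu}(2) yields $\mu(x,0) = 0$, and therefore $\bar{F}(x,0) = F(x) \times \{0\}$. Since $P_1$ is compact (hence closed) we have $x \in P_1$, and property (a) of the weak index pair gives $F(x) \cap N \subset P_1$, so that $F(x) \subset P_1 \cup (X \setminus N) \subset P_1 \cup (X \setminus \Int N)$. Consequently $F(x) \times \{0\} \subset \iota(P_1 \cup (X \setminus \Int N))$, which is the desired inclusion.

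There is really no significant obstacle: the sets $C$ and $D$ were chosen precisely so that $\mu$ takes the value $0$ on the closure of the ``interesting'' part of $P_1$ at $t=0$ and the value $1$ on the complement of $\Int N$, which makes both inclusions fall out mechanically. The only ingredient beyond chasing definitions is axiom (a) of the weak index pair, used in (i) to confine $F(x)$ to $P_1 \cup (X \setminus \Int N)$; well-definedness of the pair $(x,0) \mapsto F(x)\times\{0\}$ as a subset of $\bar X$ has already been established inside the proof of Proposition~\ref{Fbar}, so no further case analysis is needed here.
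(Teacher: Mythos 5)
Your proof is correct and follows essentially the same route the paper intends: the paper derives the corollary ``from the lines of the proof of Proposition~\ref{Fbar}'', which for (ii) uses $X\setminus\Int N\subset D$ together with Proposition~\ref{prop_mu}(3), and for (i) uses $\cl(P_1\setminus P_2)\subset C$ with Proposition~\ref{prop_mu}(2) plus Lemma~\ref{l1}(i). The only cosmetic difference is that you re-derive the inclusion $F(x)\subset P_1\cup(X\setminus\Int N)$ directly from axiom (a) of the weak index pair instead of citing Lemma~\ref{l1}(i), which is exactly how that lemma is proved anyway.
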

Let
$$
\eta:(P_1\setminus P_2)\times\{0\}\cup (P_2\cup(X\setminus \Int N))\times I \ni (x,t)\mapsto (x,0)\in\bar{X}.
$$
\begin{lm}\label{lp3}
We have
\begin{itemize}
\item[(i)] $\iota(F(x))=\eta (\bar{F}(\iota (x)))$ for $x\in P_1$,
\item[(ii)] $\iota(F(x))=\bar{F}(\iota (x))$ for $x\in \cl (P_1\setminus P_2)$.
\end{itemize}
\end{lm}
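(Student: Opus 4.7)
My plan is to unfold both sides of each equation using the definitions of $\iota$, $\bar F$, $\eta$, and $\mu$, and to invoke Lemma~\ref{l1}(i) to control where $F(x)$ lands. The key observation throughout is that $\mu(x,0) = \alpha(x)$ (Proposition~\ref{prop_mu}(1)), and that on $C \supset \cl(P_1 \setminus P_2)$ we have $\alpha \equiv 0$.

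For part (ii), let $x \in \cl(P_1 \setminus P_2)$. Since $P_1$ is compact, hence closed, $\cl(P_1 \setminus P_2) \subset P_1$, so by Lemma~\ref{l1}(i) we have $F(x) \subset T_1(P) = P_1 \cup (X \setminus \Int N)$, which means $\iota(F(x)) = F(x) \times \{0\}$ is well defined. On the other hand, $\iota(x) = (x,0)$ and $x \in C$ give $\bar F(\iota(x)) = F(x) \times \{\mu(x,0)\} = F(x) \times \{\alpha(x)\} = F(x) \times \{0\}$, yielding the equality. (By Corollary~\ref{c1}(i), this set indeed lies in $\iota(P_1 \cup (X \setminus \Int N)) \subset \bar X$, so the computation is legitimate.)

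For part (i), let $x \in P_1$. Then $\bar F(\iota(x)) = F(x) \times \{\alpha(x)\}$ and $F(x) \subset P_1 \cup (X \setminus \Int N)$ by Lemma~\ref{l1}(i). I will split into two cases to verify that $F(x) \times \{\alpha(x)\}$ lies in the domain of $\eta$. If $x \in P_1 \setminus P_2 \subset C$ then $\alpha(x) = 0$, and
\[
F(x) \times \{0\} \subset \bigl((P_1 \setminus P_2) \cup P_2 \cup (X \setminus \Int N)\bigr) \times \{0\}
\]
which sits inside $(P_1 \setminus P_2)\times\{0\} \cup (P_2 \cup (X \setminus \Int N)) \times I$. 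If instead $x \in P_2$, Lemma~\ref{l1}(i) gives $F(x) \subset T_2(P) = P_2 \cup (X \setminus \Int N)$, so $F(x) \times \{\alpha(x)\} \subset (P_2 \cup (X \setminus \Int N)) \times I$, again inside the domain of $\eta$. In either case applying $\eta$ collapses the second coordinate to $0$, producing $F(x) \times \{0\} = \iota(F(x))$.

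The only subtle step is the case bookkeeping in part (i): one has to match each part of $P_1 = (P_1 \setminus P_2) \cup P_2$ to the appropriate piece of the domain of $\eta$ and use the correct instance of Lemma~\ref{l1}(i) (for $P_1$ or for $P_2$) to see that $F(x)\times\{\alpha(x)\}$ actually lands where $\eta$ can act on it. Everything else is a direct computation.
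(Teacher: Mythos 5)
Your proposal is correct and follows essentially the same route as the paper: both parts are direct unfoldings of the definitions of $\iota$, $\bar F$, $\eta$ and $\mu$, with Lemma~\ref{l1}(i) controlling where $F(x)$ lands (the paper deduces (ii) from (i) via Corollary~\ref{c1}(i), but that corollary is itself obtained by exactly your computation $\mu(x,0)=\alpha(x)=0$ on $C$). Your explicit case check that $\bar F(\iota(x))$ lies in the domain of $\eta$ is a detail the paper's one-line argument glosses over, and is a welcome addition rather than a deviation.
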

\begin{proof}
In order to verify (i) consider $x\in P_1$. Then $\eta (\bar{F}(\iota (x)))=\eta (\bar{F}(x,0))=F(x)\times\{0\}=\iota(F(x))$. Condition (ii) is a consequence of (i) and Corollary \ref{c1} (i).
\qed\end{proof}
\begin{prop}\label{prop3}
Let $S$ be an isolated invariant set for $F$ in its isolating neighbourhood $N$, and let $P$ be a weak index pair. Assume that $\bar{X}$ and $\bar{F}$ are defined as above. Then $\bar{S}:=\iota (S)$ is an isolated invariant set for $\bar{F}$, $\bar{N}:=\iota ((P_1\cup(X\setminus \Int N))\cap N)$ is its isolating neighbourhood and $\bar{P}:=\iota (P)$ is a weak index pair for $\bar{S}$ in $\bar{N}$.
\end{prop}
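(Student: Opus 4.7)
The plan is to first establish the identity $\Inv \bar{N} = \bar{S}$, then derive the isolation $\bar{S} \subset \Int_{\bar{X}} \bar{N}$, and finally verify each of the four axioms of a weak index pair for $\bar{P}$ in $\bar{N}$. The key structural observation is that $(P_1 \cup (X \setminus \Int N)) \cap N = P_1 \cup \bd N$ (since $P_1 \subset N$ and $N$ is closed), so $\bar{N} = (P_1 \cup \bd N) \times \{0\}$ sits inside $\bar{X}$ at second coordinate zero; consequently any $\bar{F}$-solution staying in $\bar{N}$ must have second coordinate identically zero, which, via $\mu(x,0) = \alpha(x)$ and $\alpha|_D = 1$ with $X \setminus \Int N \subset D$, forces the first coordinate to lie in $P_1 \cap \Int N$ at every step.

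Concretely, for the inclusion $\Inv \bar{N} \subset \bar{S}$, I write a solution as $\bar{\sigma}(n) = (x_n, 0)$; the recursion $\bar{\sigma}(n+1) \in \bar{F}(\bar{\sigma}(n)) = F(x_n) \times \{\alpha(x_n)\}$ gives $\alpha(x_n) = 0$, hence $x_n \notin D$, hence $x_n \in P_1 \cap \Int N \subset N$, so $(x_n)$ is an $F$-solution in $N$ and $x_0 \in \Inv N = S$. Conversely, using Lemma~\ref{lp3}(ii), any $F$-solution through $x \in S$ stays in $S \subset \cl(P_1 \setminus P_2) \subset C$, where $\alpha \equiv 0$, so it lifts verbatim to a $\bar{F}$-solution in $\iota(S) \subset \bar{N}$. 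For the isolation, for $x \in S \subset \Int(P_1 \setminus P_2)$ I pick an open $U \subset X$ with $x \in U \subset P_1 \setminus P_2$; since property (d) of $P$ yields $U \cap (P_2 \cup (X \setminus \Int N)) = \emptyset$, the set $(U \times [0, 1/2)) \cap \bar{X}$ reduces to $U \times \{0\}$, which is open in $\bar{X}$ and contained in $\bar{N}$.

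For the weak index pair axioms of $\bar{P} = \iota(P) = (P_1 \times \{0\}, P_2 \times \{0\})$, axiom (a) is direct: if $(y, s) \in \bar{F}(\bar{P}_i) \cap \bar{N}$ then $s = 0$, $y \in P_1 \cup \bd N \subset N$, and $y \in F(P_i) \cap N \subset P_i$ by axiom (a) of $P$, so $(y,0) \in \bar{P}_i$. Axiom (c) follows from $\Inv \bar{N} = \bar{S} = S \times \{0\}$, property (c) of $P$, and the same neighbourhood construction as above, which gives $S \times \{0\} \subset \Int_{\bar{X}}(\bar{P}_1 \setminus \bar{P}_2)$. Axiom (d) is analogous with property (d) of $P$ in place of (c).

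The main obstacle is axiom (b), $\bd_{\bar{F}} \bar{P}_1 \subset \bar{P}_2$. Given $(y, 0) \in \bd_{\bar{F}} \bar{P}_1 \setminus \bar{P}_2$, I pick $(y_n, s_n) \in \bar{F}(\bar{P}_1) \setminus \bar{P}_1$ with $(y_n, s_n) \to (y, 0)$, say $y_n \in F(x_n)$, $s_n = \alpha(x_n)$, $x_n \in P_1$. Passing to a subsequence, either $y_n \notin P_1$ for all $n$, or $y_n \in P_1$ and $s_n > 0$ for all $n$. In the first case, axiom (a) of $P$ forces $y_n \notin N$, so $y \in P_1 \cap \bd N$; but if $y \notin P_2$ then by property (d) of $P$, $y \in P_1 \setminus P_2 \subset \Int N$, contradicting $y \in \bd N$. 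In the second case, membership $(y_n, s_n) \in \bar{X}$ together with $s_n \in (0,1)$ eventually forces $y_n \in P_1 \cap (P_2 \cup (X \setminus \Int N)) = P_2 \cup (P_1 \cap \bd N)$, and the same dichotomy based on property (d) pins the limit in $P_2$. Thus $(y,0) \in \bar{P}_2$. The subtlety, and one of the reasons for introducing $\bar{X}$ with this specific topology, is precisely that property (d) of a weak index pair is what lets one convert any limit point sitting in $\bd N$ into a point of $P_2$.
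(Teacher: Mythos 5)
Your proposal is correct and follows the same overall strategy as the paper's proof: solutions in $\bar{N}$ are forced to have second coordinate $0$, whence $\mu(x_n,0)=\alpha(x_n)=0$ pins them in $P_1\cap\Int N$ and projects them to $F$-solutions in $N$; conversely, a full solution through $x\in S$ stays in $\Inv N=S\subset\cl(P_1\setminus P_2)\subset C$, where $\alpha\equiv 0$, so it lifts via $\iota$; and the openness statements come from product neighbourhoods $U\times[0,1)$ meeting $\bar{X}$ only in $U\times\{0\}$. The one place where you genuinely diverge is axiom (b): the paper splits the source, writing $\bar{F}(\iota(P_1))=\bar{F}(\iota(P_1\setminus P_2))\cup\bar{F}(\iota(P_2))$, reducing the first piece to property (b) of $P$ via Lemma \ref{lp3}(ii) and the second to property (d) via Lemma \ref{l1}(i); you instead split the approximating points $(y_n,s_n)$ according to whether $y_n\in P_1$, and in effect rederive the needed inclusion from properties (a) and (d) of $P$ alone (your Case 1 is Lemma \ref{l_bd} combined with $P_1\cap\bd N\subset P_2$, and your Case 2 exploits the stratified structure of $\bar{X}$). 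Both routes are sound. Two small points to tighten: first, ``axiom (d) is analogous'' is slightly glib, since a point $x\in P_1\setminus P_2$ need not lie in $\Int(P_1\setminus P_2)$; you must choose $U$ open with $x\in U$, $U\cap P_2=\emptyset$ (possible because $P_2$ is closed) and $U\subset\Int N$ (by (d) of $P$), after which $(U\times[0,1))\cap\bar{X}=(U\cap(P_1\setminus P_2))\times\{0\}\subset\bar{N}$. Second, since $X$ is only assumed locally compact and normal, points of closures are in general reached by nets rather than sequences; your dichotomies survive passage to subnets, and the paper itself argues with sequences in this setting elsewhere, so this is cosmetic rather than a gap.
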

\begin{proof}
The set $\bar{N}$ is compact as a homeomorphic image of a compact set. By property (c) of $P$ 
we have $\bar{S}\subset \iota(\Int (P_1\setminus P_2))=
\Int (\iota(P_1\setminus P_2))=
\Int (\bar{P_1}\setminus \bar{P_2})\subset\Int \bar{N}$. Thus $\bar{S}\subset\Int \bar{N}$ and $\bar{P}$ satisfies (c).

We shall verify that $\Inv \bar{N}=\bar{S}$. Let $(x,0)\in\Inv \bar{N}$ and let $\bar{\sigma}:\Z\to\bar{N}$ be a solution for $\bar{F}$ with $\bar{\sigma}(0)=(x,0)$. By Proposition \ref{prop_mu} (3), $\bar{\sigma}(k)\in \bar{P_1}$ for $k\in\Z$. 
Define $\sigma(k):\Z\to P_1$
by $\sigma(k):=\iota ^{-1}(\bar{\sigma}(k))$ for $k\in\Z$. Then $\sigma(0)=x$ and, by Lemma \ref{lp3} (i) for any $k\in\Z$, $\sigma(k+1)=\iota ^{-1}(\bar{\sigma}(k+1))\in \iota ^{-1}(\bar{F}(\bar{\sigma}(k))=\iota ^{-1}(\bar{F}(\iota(\sigma(k)))=F(\sigma(k))$, which means that $\sigma$ is a solution for $F$ through $x$ in $N$. Therefore $(x,0)\in\bar{S}$. We have proved that $\Inv \bar{N}\subset\bar{S}$. In order to prove the opposite inclusion consider $(x,0)\in \bar{S}$. Since $M:=\cl (P_1\setminus P_2)$ is an isolating neighbourhood for $S$, there exists $\sigma:\Z\to M$, a solution for $F$ through $x$. One can check that then $\iota\circ\sigma$ is a solution for $\bar{F}$ through $(x,0)$ in $\iota(M)\subset\bar{N}$, hence, $(x,0)\in \Inv \bar{N}$.

Now we will show that $\bar{P}$ is a weak index pair for $\bar{S}$ in $\bar{N}$. Note that property (c) of $\bar{P}$ has already been verified. Moreover, it is clear that $\bar{P_2}\subset\bar{P_1}$ are compact subsets of $\bar{N}$ with $\bar{P_1}\setminus\bar{P_2}\subset\Int \bar{N}$, hence (d) is verified. In order to verify condition (a) for $\bar{P}$ observe that
$$
\begin{array}{rcl}
\bar{F}(\bar{P_i})\cap\bar{N}&=&
\bar{F}(\iota(P_i))\cap\iota(N\cap(P_1\cup(X\setminus\Int N)))\\
&\subset&
\eta(\bar{F}(\iota(P_i)))\cap\iota(N\cap(P_1\cup(X\setminus\Int N)))\\
&=&\iota(F(P_i)\cap N\cap(P_1\cup(X\setminus\Int N))),
\end{array}
$$
where the last equality follows from Lemma \ref{lp3} (i). By the above inclusion and  property (a) of $P$, we have
$
\bar{F}(\bar{P_i})\cap\bar{N}\subset
\iota(P_i\cap(P_1\cup(X\setminus\Int N)))
=\bar{P_i}.
$

It remains to prove condition (b). We have
$$
\begin{array}{rcl}
\bd _{\bar{F}}(\bar{P_1})&=&\iota (P_1)\cap\cl (\bar{F}(\iota(P_1))\setminus\iota(P_1))\\
&=&\iota (P_1)\cap\cl (\bar{F}(\iota(P_1\setminus P_2))\cup\bar{F}(\iota(P_2))\setminus\iota(P_1))
\end{array}
$$
and consequently, using Lemma \ref{lp3} (ii), we get
$$
\begin{array}{rcl}
\bd _{\bar{F}}(\bar{P_1})&=&\iota (P_1)\cap\cl [\iota(F(P_1\setminus P_2))\cup\bar{F}(\iota(P_2))\setminus\iota(P_1)]\\
&\subset&\iota (P_1)\cap[\cl (\iota(F(P_1\setminus P_2)\setminus P_1))\cup\cl (\bar{F}(\iota(P_2)))]\\
&=&\iota (P_1\cap\cl (F(P_1\setminus P_2)\setminus P_1))\cup(\iota (P_1)\cap\cl (\bar{F}(\iota(P_2))))\\
&\subset&\iota(P_2)\cup(\iota (P_1)\cap\cl (\bar{F}(\iota(P_2)))),
\end{array}
$$
where the last inclusion follows from property (b) of $P$. By Lemma \ref{l1} (i) we have $\bar{F}(\iota(P_2))\subset (P_2\cup(X\setminus \Int N))\times I$, hence
$$
\begin{array}{rcl}
\iota (P_1)\cap\cl (\bar{F}(\iota(P_2)))&\subset&
\iota (P_1)\cap P_2\times I\cup \iota (P_1)\cap (X\setminus \Int N))\times I\\
&\subset&\iota (P_2)\cup \iota (P_1)\cap (X\setminus \Int N))\times I\\
\end{array}
$$
and, finally, $\iota (P_1)\cap (X\setminus \Int N)\times I\subset\bar{P_2}$ as $P_1\setminus P_2\subset\Int N$.
\qed\end{proof}
\begin{thm}\label{csbar}
Let $S$ be an isolated invariant set for $F$ in its isolating neighbourhood $N$, and let $P$ be a weak index pair. Assume that $\bar{X}$, $\bar{F}$ and $\bar{S}$ are defined as in Proposition \ref{prop3}. Then $C(S,F)=C(\bar{S},\bar{F})$.
\end{thm}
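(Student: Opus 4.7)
The plan is to construct an isomorphism between the index data $(H^*(P), I_P)$ and $(H^*(\bar P), I_{\bar P})$ that survives the Leray functor. By Proposition~\ref{prop3}, $\bar P$ is a weak index pair for $\bar S$ in $\bar N$, so by Definition~\ref{def:con} and Theorem~\ref{th_ind} we have $C(\bar S, \bar F) = L(H^*(\bar P), I_{\bar P})$. The restriction of $\iota$ to $P$ is a homeomorphism of pairs $P \to \bar P$, inducing an isomorphism $\iota^* \colon H^*(\bar P) \to H^*(P)$. It therefore suffices to prove $\iota^* \circ I_{\bar P} = I_P \circ \iota^*$, which yields an isomorphism of the pairs $(H^*(P), I_P)$ and $(H^*(\bar P), I_{\bar P})$ and hence of their Leray reductions.

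First, extend $\iota$ to a pair map $\tilde\iota \colon T(P) \to T(\bar P)$ by the same formula $\tilde\iota(x) := (x, 0)$. The only non-trivial well-definedness check is $\iota(X \setminus \Int N) \subset \bar X \setminus \Int \bar N$, which follows from $\bar N = \iota((P_1 \cap N) \cup \bd N) \subset X \times \{0\}$ together with the observation that every neighbourhood in $\bar X$ of a point $(y, 0)$ with $y \in P_2 \cup (X \setminus \Int N)$ contains points with $t > 0$, which automatically lie outside $\bar N$. The identity $\tilde\iota \circ i_P = i_{\bar P} \circ \iota$ is then immediate and, combined with Lemma~\ref{l1}(ii), yields the left-hand commutativity $\tilde\iota^* \circ H^*(i_{\bar P})^{-1} = H^*(i_P)^{-1} \circ \iota^*$.

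The core step is to show that the morphisms $\tilde\iota \circ F_{P, T(P)}$ and $\bar F_{\bar P, T(\bar P)} \circ \iota$ from $P$ to $T(\bar P)$ induce the same homomorphism in Alexander--Spanier cohomology. For $x \in P_1$ they send $x$ to $F(x) \times \{0\}$ and $F(x) \times \{\alpha(x)\}$ respectively, and they can be connected by the homotopy
\[
H \colon P \times [0, 1] \to T(\bar P), \qquad H(x, s) := F(x) \times \{s\alpha(x)\}.
\]
This is upper semicontinuous with acyclic values, each $H(x, s)$ being homeomorphic to $F(x)$. The crucial verification is $H(P_i \times [0, 1]) \subset T_i(\bar P)$: for $x \in P_1 \setminus P_2 \subset C$ one has $\alpha(x) = 0$, so $H(x, s) = \iota(F(x)) \subset \iota(T_1(P)) \subset T_1(\bar P)$; for $x \in P_2$, weak index pair axiom (a) gives $F(x) \subset P_2 \cup (X \setminus \Int N)$, whence $H(x, s) \subset (P_2 \cup (X \setminus \Int N)) \times I$, and every such point lies in $T_2(\bar P)$ (points with $t > 0$ are in $\bar X \setminus \bar N$ because $\bar N \subset X \times \{0\}$, while points with $t = 0$ lie in $\bar P_2$ or $\bar X \setminus \Int \bar N$ by the same argument used for $\tilde\iota$).

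By homotopy invariance of morphism-induced cohomology homomorphisms in the sense of \cite{G76, G83, M90}, $H(\cdot, 0) = \tilde\iota \circ F_{P, T(P)}$ and $H(\cdot, 1) = \bar F_{\bar P, T(\bar P)} \circ \iota$ induce the same map $H^*(T(\bar P)) \to H^*(P)$, which by functoriality reads $\iota^* \circ H^*(\bar F_{\bar P, T(\bar P)}) = H^*(F_{P, T(P)}) \circ \tilde\iota^*$. Combining this with the left-hand commutativity produces $\iota^* \circ I_{\bar P} = I_P \circ \iota^*$ and thus $C(S, F) = C(\bar S, \bar F)$. The principal obstacle is ensuring that $H$ stays inside the pair $T(\bar P)$ throughout the homotopy; this relies on a careful analysis of how $\bar N$ sits inside $\bar X$ and uses Lemma~\ref{l1}(i) together with the specific choices of $C$, $D$, and $\alpha$ provided by Urysohn's lemma and Proposition~\ref{prop_mu}.
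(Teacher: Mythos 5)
Your proof is correct, but it takes a genuinely different route from the paper's. The paper also reduces the claim to showing that $(H^*(P),I_P)$ and $(H^*(\bar P),I_{\bar P})$ are conjugate, but instead of the embeddings $\iota$ and $\tilde\iota$ it uses the projections $p_1\colon\bar P\ni(x,0)\mapsto x\in P$ and $p_2\colon T(\bar P)\ni(x,t)\mapsto x\in T(P)$. The payoff of the projection direction is that the relevant square commutes \emph{strictly}: $p_2\circ\bar F=F\circ p_1$ as multivalued maps, because forgetting the second coordinate erases exactly the discrepancy between $F(x)\times\{0\}$ and $F(x)\times\{\alpha(x)\}$; no homotopy is needed, and $p_2$ is shown to induce an isomorphism from the outer square ($j_1,j_2$ excisions, $p_1$ a homeomorphism). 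Your embedding direction must pay for that discrepancy with the homotopy $H(x,s)=F(x)\times\{s\alpha(x)\}$ and an appeal to homotopy invariance for morphisms, but in exchange every arrow in your diagram is transparently a well-defined map of pairs: you check $\tilde\iota(X\setminus\Int N)\subset\bar X\setminus\Int\bar N$ and $H(P_2\times I)\subset T_2(\bar P)$ explicitly, and these (together with $\alpha|_{P_1\setminus P_2}=0$, which keeps $H$ inside $\bar X$) are precisely the points where the argument could fail. This is a genuine advantage: the paper's $p_2$ is delicate as literally stated, since $T_{\bar N,2}(\bar P)\supset\bar X\setminus\Int\bar N\supset X\times\{1\}$ projects onto all of $X$ rather than into $P_2\cup(X\setminus\Int N)$, so the projection argument really requires restricting to the relevant images, whereas your $\tilde\iota$ and $H$ land where they should by construction. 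The remaining ingredients (the identification $C(\bar S,\bar F)=L(H^*(\bar P),I_{\bar P})$ via Proposition~\ref{prop3} and Theorem~\ref{th_ind}, the functoriality bookkeeping giving $\iota^*\circ I_{\bar P}=I_P\circ\iota^*$, and passage through the Leray functor) are handled correctly.
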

\begin{proof}
By Proposition \ref{prop3}, $\bar{S}$ is an isolated invariant set for $\bar{F}$. Consider a weak index pair $\bar{P}$ for $\bar{S}$ in $\bar{N}$, as defined in Proposition \ref{prop3}.
By Theorem \ref{th_ind} it suffices to show that $L(H^*(P),I_P)$ and $L(H^*(\bar{P}),I_{\bar{P}})$ are isomorphic.

By Lemma \ref{l1} (i), $F$ is a map of pairs $(P_1,P_2)$ and $(T_{N,1}(P),T_{N,2}(P))$. By similar reasons $\bar{F}$ maps $(\bar{P_1},\bar{P_2})$ into $(T_{\bar{N},1}(\bar{P}),T_{\bar{N},2}(\bar{P}))$.
We have the commutative diagram
\begin{center}
\begin{tabular}{ccccc}
$(P_1,P_2)$&$\stackrel{F}{\longrightarrow }$&
$(T_{N,1}(P),T_{N,2}(P))$
&$\stackrel{j_1}{\longleftarrow}$&$(P_1,P_2)$\\[4mm]


$\left\uparrow\rule{0cm}{0.5cm}\right.$\small{$p_1$}&&$\left\uparrow\rule{0cm}{0.5cm}\right.$\small{$p_2$}&&$\left\uparrow\rule{0cm}{0.5cm}\right.$\small{$p_1$}\\[4mm]

$(\bar{P_1},\bar{P_2})$&$\stackrel{\bar{F}}{\longrightarrow}$&
$(T_{\bar{N},1}(\bar{P}),T_{\bar{N},2}(\bar{P}))$
&$\stackrel{j_2}{\longleftarrow}$&$(\bar{P_1},\bar{P_2})$
\end{tabular}
\end{center}
in which $j_1, j_2$ are inclusions and
$$
\begin{array}{l}
p_1:(\bar{P_1},\bar{P_2})\ni(x,0)\mapsto x\in (P_1,P_2),\\
p_2:(T_{\bar{N},1}(\bar{P}),T_{\bar{N},2}(\bar{P}))\ni (x,t)\mapsto x\in (T_{N,1}(P),T_{N,2}(P))
\end{array}
$$
are projections. 
By Lemma \ref{l1} (ii)
inclusion $j_1$ induces an isomorphism in cohomology, and so does $j_2$. Since, additionally, projection $p_1$ induces an isomorphism, it follows that $p_2$ induces an isomorphism. Eventually we infer that $I_P$ and $I_{\bar{P}}$ are conjugate.
\qed\end{proof}
\begin{thm}
Under the regime of Theorem \ref{csbar}, $\bar{S}$ is a strongly isolated invariant set for $\bar{F}$.
\end{thm}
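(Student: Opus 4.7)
The plan is to exhibit a compact $\bar M \subset \bar X$ with $\bar S = \Inv \bar M$ and $\bar S \cup \bar F(\bar S) \subset \Int \bar M$, verifying the Srzednicki-style strong isolation condition~\eqref{inKS}; together with Definition~\ref{def:iis} this witnesses $\bar S$ as strongly isolated for $\bar F$. The candidate I would use is
$$\bar M := \bar N \cup ((P_2 \cup D) \times [0,\delta])$$
for some $\delta \in (0,1)$, with $\bar N$ and $D$ as in the preceding construction. Compactness and $\bar M \subset \bar X$ follow from the observation that $D \cap C = \emptyset$ together with $\cl(P_1\setminus P_2) \subset C$ force $D \cap (P_1 \setminus P_2) = \emptyset$, hence $D \subset P_2 \cup (X\setminus \Int N)$, and then $(P_2 \cup D) \times [0,\delta]$ is a subset of the cylinder part of $\bar X$.

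To verify $\Inv \bar M = \bar S$, the key observations are: $\mu(x,t)$ is nondecreasing in $t$ (Proposition~\ref{prop_mu}); on $D$ one has $\alpha = 1$, so $\bar F$ sends $D \times [0,\delta]$ to level~$1$, outside $\bar M$; and property (c) of the weak index pair forces $S \cap P_2 = \emptyset$. For a solution $\bar\sigma: \Z \to \bar M$, writing $\bar\sigma(k) = (\sigma(k),t_k)$, the sequence $\{t_k\}$ is nondecreasing and bounded above by $\delta<1$, and telescoping $t_{k+1}-t_k = (1-t_k)\alpha(\sigma(k))$ across $\Z$ forces $\alpha(\sigma(k)) \to 0$ as $|k| \to \infty$ (since $1-t_k \geq 1-\delta > 0$). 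This rules out $\sigma(k) \in D$ for any $k$; hence at positive $t$ any solution lives in $P_2 \setminus D \subset P_2 \cap \Int N \subset N$, so $\sigma(0) \in \Inv N = S$, contradicting $S \cap P_2 = \emptyset$. Therefore $t_k \equiv 0$, and then $\sigma(k) \in \alpha^{-1}(0) \cap (P_1 \cup D) = P_1 \cap \alpha^{-1}(0) \subset \Int N$ (using $\alpha^{-1}(0) \cap D = \emptyset$), giving $\sigma(0) \in \Inv N = S$ and $\bar\sigma(0) \in \iota(S) = \bar S$. The reverse inclusion $\bar S \subset \Inv \bar M$ is clear from Proposition~\ref{prop3}.

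To verify $\bar S \cup \bar F(\bar S) \subset \Int \bar M$, a direct neighborhood-base calculation in $\bar X$ shows that for every $x \in P_1 \cup D$, a basic open neighborhood $V \times [0,\epsilon) \cap \bar X$ of $(x,0)$ with $V$ open in $X$ and $\epsilon \leq \delta$ has level-$0$ part $V \cap (P_1 \cup (X\setminus \Int N)) \subset P_1 \cup D$ and positive-$t$ part $V \cap (P_2 \cup (X\setminus \Int N)) \subset P_2 \cup D$ (both inclusions using $D \supset X \setminus \Int N$), and each is inside $\bar M$. Thus $\iota(P_1 \cup D) \subset \Int \bar M$. Now $\bar S \subset \iota(\Int(P_1 \setminus P_2)) \subset \iota(P_1 \cup D) \subset \Int \bar M$ by property (c), and by Lemma~\ref{l1}(i) we have $\bar F(\bar S) = F(S) \times \{0\} \subset \iota(P_1 \cup (X\setminus \Int N)) \subset \iota(P_1 \cup D) \subset \Int \bar M$.

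The main obstacle is the verification $\Inv \bar M = \bar S$, since $\bar M$ intentionally extends into positive $t$ in order to supply $\bar X$-interior neighborhoods for the points of $\bar F(\bar S)$ lying outside $\Int N$. Ruling out hidden invariant trajectories in the cylinder $(P_2 \cup D) \times (0,\delta]$ relies on the combination of three facts: monotonicity of $\mu$ in $t$, the absorbing action $\mu(x,t)=1$ for $x \in D$, and the disjointness $S \cap P_2 = \emptyset$ forced by property (c) of the weak index pair. Compactness of $P_2 \cup D$ and the existence of $D$ are the standing hypotheses of the construction.
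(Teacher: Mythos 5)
Your proposal is correct and takes essentially the same route as the paper: since $X\setminus \Int N\subset D$ and $D\cap P_1\subset P_2$, your candidate satisfies $P_2\cup D=P_2\cup(X\setminus\Int N)$, so $\bar M$ is exactly the paper's choice $M=(P_1\setminus P_2)\times\{0\}\cup(P_2\cup(X\setminus\Int N))\times[0,\frac{1}{2}]$ (with $\delta$ in place of $\frac{1}{2}$), and your verification rests on the same ingredients (absorption of $D\times I$ to level $1$, reduction of the remaining solutions to solutions of $F$ in $N$, and $S\cap P_2=\emptyset$ from property (c)). Two cosmetic slips, neither affecting correctness: ruling out $\sigma(k)\in D$ follows from the absorption to level $1$, not from the limit $\alpha(\sigma(k))\to 0$ as $|k|\to\infty$; and in the positive-level case the contradiction with $S\cap P_2=\emptyset$ should be drawn at the index $k$ with $t_k>0$ (the whole projected trajectory lies in $\Inv N=S$), not at $\sigma(0)$.
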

\begin{proof}
We need to find a strongly isolating neighbourhood $M$ for $\bar{S}$.
Let $N$, $P$, $\bar{N}$ and $\bar{P}$ be as in Proposition \ref{prop3}. Define $M:=(P_1\setminus P_2)\times\{0\}\cup (P_2\cup (X\setminus \Int N))\times [0,\frac{1}{2}]$.
Clearly $\bar{S}\subset M$ are compact subsets of $\bar{X}$. By Proposition \ref{prop3}, $\bar{S}\subset \Int \bar{N}\subset \Int M$ and $\bar{S}=\Inv \bar{N}\subset \Inv M$. We shall prove that $\Inv M\subset\bar{S}$. By Corollary \ref{c1} (ii) we have $\Inv M\cap (X\setminus\Int N)\times I=\emptyset$. Suppose that $(x,t)\in \Inv M\cap [((P_1\setminus P_2)\times\{0\}\cup P_2\times [0,\frac{1}{2}])\setminus \bar{S}]$. Let $\bar{\sigma}:\Z\to M$ be a solution for $\bar{F}$ with $\bar{\sigma}(0)=(x,t)$. According to Corollary \ref{c1} (ii) we may assume that $\bar{\sigma}(k)\in M\setminus (X\setminus \Int N)\times[0,\frac{1}{2}]$ for $k\in\Z$. Then $\eta\circ \bar{\sigma}:\Z\to \bar{N}$ is a solution for $\bar{F}$ through $(x,0)$ hence, by Proposition \ref{prop3}, $(x,0)\in\bar{S}$, a contradiction.

By Corollary \ref{c1} (i), $\bar{F}(\bar{S})\subset (P_1\cup(X\setminus \Int N))\times\{0\}\subset\Int M$, which along with $\bar{S}\subset\Int M$ yields $\bar{S}\cup\bar{F}(\bar{S})\subset\Int M$.
\qed\end{proof}
\section{Examples}\label{sec:ex}
\begin{ex}
{\em Let $f$ be given by \eqref{eq:f} and let $F$ be defined as in Section \ref{sec:m-ex} (see Figure \ref{fig:z2on16samples}).
As in Example \ref{e1} consider $0$, a hyperbolic fixed point of $f$ and an isolated invariant set $S:=[\frac{31}{32},\frac{1}{32}]\in\cA$ for $F$ containing $0$.

We are going to show that the Conley index of $S$ for $F$ is the same as the Conley index of $\{0\}$ for $f$.

Observe that the set $N:=[\frac{15}{16},\frac{1}{16}]$ is an isolating neighbourhood
for $F$ and $S$. The sets $P_1:=N$ and $P_2:=\{\frac{15}{16}\}\cup \{\frac{1}{16}\}$ give raise to a weak index pair $P=(P_1,P_2)$ for $F$ in $N$. Then $H_k ^*(P)$ has one generator for $k=1$ and is trivial for other $k$. An easy computation shows that
$$
C_k(S,F)=\left\{\begin{array}{rl}
(\Z,\id)&\mbox{for } k=1\\[1ex]
0&\mbox{otherwise}.
\end{array}
\right.
$$
Let us note that $N$ is also an isolating neighbourhood for $\{0\}$ and $f$, and $P$ is a weak index pair for $f$ in $N$. It can be easily verified that
$$
C_k(\{0\},f)=\left\{\begin{array}{rl}
(\Z,\id)&\mbox{for } k=1\\[1ex]
0&\mbox{otherwise}.
\end{array}
\right.
$$
}
\end{ex}
\begin{ex}
{\em Let $f$ and $F$ be the same as above. In Example \ref{e2} we have observed that $S':=\{\frac{1}{3},\frac{2}{3}\}$ is a hyperbolic periodic trajectory of $f$. Moreover, $S:=[\frac{9}{32},\frac{13}{32}]\cup [\frac{19}{32},\frac{23}{32}]$ is a cover of $S'$ by elements of the grid $\cA$ and each point of $S$ belongs to a $2$-periodic trajectory of $F$ in $S$. The set $N:=[\frac{17}{64},\frac{27}{64}]\cup [\frac{37}{64},\frac{47}{64}]$ is an isolating neighbourhood for $S$ and $F$.

In order to compute $C(S,F)$, define $P_1:=N$ and $P_2:= \{\frac{17}{64},\frac{27}{64},\frac{37}{64},\frac{47}{64}\}$. It is straightforward to observe that $P=(P_1,P_2)$ is a weak index pair for $F$ in $N$. Then $H_k ^*(P)$ is trivial for $k\neq 1$ and it has 2 generators for $k=1$.
The index map is an isomorphism
$$
I_P=\left[\begin{array}{rr}
0&1\\
1&0
\end{array}
\right].
$$
We have
$$
C_k(S,F)=\left\{\begin{array}{rl}
(\Z ^2,\tau)&\mbox{for } k=1\\[1ex]
0&\mbox{otherwise,}
\end{array}
\right.
$$
where $\tau$ is a transposition $\tau:\Z ^2\ni(x,y)\mapsto (y,x)\in \Z^2.$

It is easy to verify that $N$ is an isolating neighbourhood for $S'$ and $f$, and $P$ is a weak index pair for $f$ in $N$. Again, $H_k ^*(P)$ is trivial for $k\neq 1$ and has two generators for $k=1$. The index map has the form
$$
I_P=\left[\begin{array}{rr}
0&1\\
1&0
\end{array}
\right]
$$
and
$$
C_k(S',f)=\left\{\begin{array}{rl}
(\Z ^2,\tau)&\mbox{for } k=1\\[1ex]
0&\mbox{otherwise.}
\end{array}
\right.
$$
Thus, the Conley index of $S$ for $F$ is the same as the Conley index
of $\{\frac{1}{3},\frac{2}{3}\}$ for $f$.
}
\end{ex}

\end{document}